\def\eqnarray{\stepcounter{equation}\let\@currentlabel=\theequation
	\global\@eqnswtrue
	\tabskip\@centering\let\\=\@eqncr
	$$\halign to \displaywidth\bgroup\hfil\global\@eqcnt\z@
	$\displaystyle\tabskip\z@{##}$&\global\@eqcnt\@ne
	\hfil$\displaystyle{{}##{}}$\hfil
	&\global\@eqcnt\tw@ $\displaystyle{##}$\hfil
	\tabskip\@centering&\llap{##}\tabskip\z@\cr}
\def\endeqnarray{\@@eqncr\egroup
	\global\advance\c@equation\m@ne$$\global\@ignoretrue}
\def\@yeqncr{\@ifnextchar [{\@xeqncr}{\@xeqncr[5pt]}}
\begin{document}

\renewcommand{\PaperNumber}{***}

\FirstPageHeading

\ShortArticleName{
Analytic properties of some basic hypergeometric-Sobolev-type orthogonal polynomials}

\ArticleName{Analytic properties of some basic 
	hypergeometric-Sobolev-type orthogonal polynomials}

\Author{Roberto S.~Costas-Santos\,$^\dag\!\!\ $,
	Anier Soria-Lorente\,$^\ddag{}$}

\AuthorNameForHeading{R. S.~Costas-Santos, Anier Soria-Lorente}
\Address{$^\dag$~Departamento de F\'isica y Matem\'{a}ticas,
	Universidad de Alcal\'{a}
, Alcal\'{a} de Henares, Spain
} 
\URLaddressD{
	\href{http://www.rscosan.com}
	{http://www.rscosan.com}
}
\EmailD{rscosa@gmail.com} 
\Address{$^\ddag$ Department of Basic Sciences, Granma University,  
Bayamo, Cuba}
\EmailD{asorial@udg.co.cu}


\ArticleDates{Received 2 June 2018 in final form ????; Published online 31 August 2018}

\Abstract{In this contribution we consider sequences of 
	monic polynomials orthogonal with respect to a 
	Sobolev-type inner product
	\[
	\langle f,g \rangle _{S}:=
	\langle {\bf u}, f g\rangle +N (\mathscr D_q f)(\alpha)
	(\mathscr D _{q}g)(\alpha),\qquad \alpha\in \mathbb R,
	\quad N\ge 0,
	\]
	where $\bf u$ is a $q$-classical linear functional and 
	$\mathscr D _{q}$ is the $q$-derivative operator. 
	
	We obtain some algebraic properties of these polynomials 
	such as an explicit representation, a five-term recurrence 
	relation as well as a second order linear $q$-difference 
	holonomic equation fulfilled by such polynomials.
	
	We present an analysis of the behaviour of  its zeros 
	as a function of the mass $N$. In particular, we 
	obtain the exact values of $N$ such that the 
	smallest (respectively, the greatest) zero of 
	the studied polynomials is located outside of the 
	support of the measure.
	
	We conclude this work considering two examples.
}
\Keywords{
Classical orthogonal polynomials;
Sobolev-type orthogonal polynomials;
basic Hypergeometric series; 
zeros.
}

\Classification{33D45, 05A30, 39A13}

\section{Introduction}
The study of polynomial sequences orthogonal with 
respect to an inner product  involving differences 
was started in two papers 
\cite{bav1,bav2}. H. Bavinck considered the inner product
\begin{equation} 
\langle p, q\rangle=\int_{\mathbb R} p(t) q(t) d\mu(t)
+\lambda (\Delta p)(c)(\Delta q)(c), \label{1:1}
\end{equation}
where $p$, $q$ are polynomials with real coefficients, 
$c \in \mathbb R$, $\mu$ is a distribution function with 
infinite support such that $\mu$ has no points of 
increase in the interval $(c, c+1)$, $\lambda\in \mathbb R_+$, 
and where $(\Delta p)(c)=p(c+1)-p(c)$ denotes the 
forward difference operator.

Later on, in \cite{bav3} the authors obtained a 
difference operator of  infinite order for which 
these orthogonal polynomials (called 
Sobolev-type Meixner polynomials) are eigenfunctions. 
The name Sobolev-type is justified from the analogy 
with the case
\[
\langle p, q\rangle=\int_{\mathbb R} p(t) q(t) d\mu(t)
+M  p'(c)q'(c),
\]
which has been widely considered in the literature 
(e.g. see the survey in Sobolev polynomials 
\cite{Fin1}). 

Taking all this into account, the main idea of this 
paper is to obtain similar results for the polynomials 
sequence orthogonal with respect to a $q$-analogue of 
\eqref{1:1}, i.e. orthogonal with respect to the 
Sobolev-type inner product
\[
\langle f,g \rangle _{S}:=
\langle {\bf u}, f g\rangle +N (\mathscr D_q f)(\alpha)(\mathscr 
D _{q}g)(\alpha ),
\]
where  $\bf u$ is a $q$-classical linear functional and 
$N, \alpha \in \mathbb{R}$. 

The structure of the paper is the following: In Section 2, 
we introduce some notation and  results we need 
to prove some of the results throughout the paper. 
In Section 3, we define the discrete  Sobolev-type 
polynomials and present some algebraic, and 
analytical, results  about these polynomials.  
And in Section 4 we apply the obtained results 
to the Al-Salam-Carlitz I polynomials as well as  
to the Stieltjes--Wigert polynomials.
\section{Basic definitions, notations and results}
Let  $\mathbb P$ denote the vector space of univariate, 
complex-valued, polynomials, and let $\mathbb P'$ 
denote its algebraic dual space. 
We denote by $\langle {\bf u}, p\rangle$ the 
{\em duality bracket} for $\bf u\in \mathbb  P'$ 
and $p\in \mathbb P$, and $u_n=\langle {\bf u}, 
x^n\rangle$ with $n\ge 0$ are the canonical 
moments of {\bf u}.

\begin{definition} \cite{chi} A linear functional 
$\bf u$ is said to be quasi-definite  if the principal 
submatrices of the infinite Hankel matrix associated 
with the sequence of the moments $(u_n)_n$ 
of the linear functional $\bf u$, i.e.  $H_k=\left(u_{i+k}\right)_{i,j=0}^k$,
are non-singular. 
\end{definition}

We are going to consider the quasi-definite linear functional $\bf u$. 
Therefore, there exists a sequence of monic polynomials 
$(p_n)_n$ with $\deg p_n=n$, orthogonal with respect to $\bf u$, i.e. 
\[
\langle {\bf u}, p_n p_{m}\rangle = k_n \delta_{n,m}, 
\qquad k_n=\langle {\bf u}, p^2_n\rangle\ne 0.
\]
Such a sequence is said to be a monic orthogonal polynomial 
sequence (MOPS) associated with the linear functional $\bf u$.
\begin{definition}

Let $\bf u$ be a linear functional and let $p$ be a fixed polynomial. 
We define the linear functional $p\,{\bf u}$ as follows:
\[
\langle p\, {\bf u}, r\rangle:=\langle {\bf u}, p\, r\rangle,
\quad r\in \mathbb P.
\]
\end{definition}
We also need to introduce the concept of quasi-orthogonality that 
is weaker than the concept of orthogonality.
\begin{definition} 
Given a linear functional $\bf u$. Let $p_n$ be a polynomial of 
degree $n\geq r$. If $p_n$ satisfies  the conditions
\[ 
\langle {\bf u}, x^{j} p_n\rangle=
\begin{cases}
\displaystyle0, & j=0,1,\ldots, n-r-1, \\ 
\displaystyle\neq 0, & j=n-r,
\end{cases}
\] 
then $p_n$ is said to be quasi-orthogonal of order $r$ with respect 
to the linear 
functional $\bf u$.

Moreover, if there exists an integral representation of $\bf u$ as
follows
\[
\langle {\bf u}, p\rangle=\int_I p(x)\, d\mu(x),\quad I\subseteq\mathbb R,
\]
then $p_n$ is said to be quasi-orthogonal of order $r$ with respect 
to the measure $d\mu(x)$.
\end{definition}

The next definitions are related with the $q$-polynomials located 
in the Hahn class. In fact, we assume along the paper $0<q<1$. 
\begin{definition}
The $q$-derivative or the Euler--Jackson $q$-difference operator 
 ${\mathscr D}_{q}$ is defined as follows:
\[
({\mathscr D}_{q}f)(x):= \left\{\begin{array}{cl}
\displaystyle \frac{f(qx)-f(x) }{(q-1)x} &
\text{if}\ x\ne 0 \wedge q\ne 1, \\[3mm]
f'(x) & \text{if}\ x=0 \vee q=1.\end{array}\right.
\]
\end{definition}
\begin{definition}
Given a linear functional $\bf u$. We say  that $\bf u$ is $q$-classical if it 
fulfils the Pearson-type distributional difference equation 
\[
{\mathscr D_q}[\phi(s){\bf u}]=\psi(s){\bf u}, \quad \phi, \psi\in \mathbb P,
\]
where $\deg \phi\le 2$, $\deg \psi=1$.
The corresponding MOPS associated with $\bf u$ is said to be a 
$q$-classical discrete MOPS, also called (monic) $q$-polynomials.
\end{definition}
\begin{remark}
Observe that these functionals $\bf u$ 
usually have the form 
\[
\langle {{\bf u}}, P\rangle = \left\{ \begin{array}{ll} 
\displaystyle \int_{s_0}^{s_1} P(x) \rho(x)\,  d_qx, & \ \mbox{\small
Al-Salam-Carlitz I, discrete $q$-Hermite I,}
\\[0.5cm]
\displaystyle \int_{a}^{b} P(x) \rho(x) dx, & \ \mbox{\small
Stieltjes--Wigert.}
\end{array} \right.
\]
etc., where $\rho$ is a weight function satisfying the following
difference equation of Pearson-type
\[
\Delta [\phi(s) \rho(s)] \!=\!\psi(s)\rho(s) \Delta x(s-1/2)  \iff
\frac{\rho(s+1)}{\rho(s)}\!=\! \frac{\phi(s)\!+\!\psi(s)\Delta x(s-1/2)}{\phi(s+1)}.
\]
\end{remark}
\begin{definition} 
The Jackson $q$-integrals (see \cite{Gasper, koekoeketal}) are defined 
by
\[
\int_0^a f(t)d_qt := a(1-q) \sum_{n=0}^\infty f(q^n a) q^n,
\]
\[
\int_a^0 f(t)d_qt := -a(1-q) \sum_{n=0}^\infty f(q^n a) q^n,
\]
if $a>0$ and $a<0$, respectively. 
So, we have
\[
\int_a^b f(t) d_q t = \int_0^b f(t)d_qt \,- \int_0^a f(t)d_qt,
\]
and 
\[
\int_a^b f(t) d_q t = \int_a^0 f(t)d_qt \,+ \int_0^b f(t)d_qt,
\]
when $0<a<b$ and $a<0<b$, respectively. 
Furthermore, we make use of the improper $q$-Jackson integral
\[
\int_0^\infty f(t)d_qt := (1-q) \sum_{n=-\infty}^\infty f(q^n ) q^n.
\]
\end{definition}
Examples of these polynomials are the big $q$-Jacobi, the big 
$q$-Legendre, the  big $q$-Laguerre, the Little $q$-Legendre, 
the Al-Salam-Carlitz I, and the discrete $q$-Hermite of type I 
polynomials \cite[pp. 438, 443, 478, 534, 547]{koekoeketal}, 
among others.  

The next definitions are related with the $q$-calculus framework: 
\begin{definition}
The $q$-number $[z]_{q}$, is 
defined  by
\[
[z] _{q}:= \frac{1-q^{z}}{1-q},\quad z\in \mathbb{C},
\]
\end{definition}
\begin{definition}
A $q$-analogue of the factorial of $n$ is defined by
\[ 
[0]_q!:=1, \quad [ n] _{q}!:= [n] _{q}[n-1] _{q}
\cdots [1]_{q}, \ n=1, 2, \dots
\] 
\end{definition}
\begin{definition}
A $q$-analogue of the Pochhammer symbol,  or shifted factorial, 
\cite{Gasper,koekoeketal} is defined by
\[ 
(a;q) _0:= 1, \quad (a;q)_n:= \prod_{j=0}^{n-1}(1-aq^{j}) , \ n=1,2,\ldots
\]
\[
(a;q)_\infty= \prod_{j=0}^\infty (1-aq^{j}), \quad |a|<1.
\]
Moreover, we will use the following notation 
\[ 
(a_{1},\ldots ,a_{r};q) _{k}:= \prod_{j=1}^r(a_{j};q) _{k}.
\] 
\end{definition}
\begin{definition}
Let $\left(a_{i}\right)_{i=1}^{r}$ and $\left(b_{j}\right)_{j=1}^{s}$
be complex numbers such that $b_{j}\neq q^{-n}$ with $
n\in \mathbb{N}_0$, for $j=1,2,\ldots, s$. 
The basic hypergeometric series, or $q$-hypergeometric, $_{r}\phi _{s}$ 
series with variable $z$ is defined by
\[
_{r}\phi _{s}(
a_{1}, \ldots ,a_{r}; b_{1}, \ldots ,b_{s} ; q,z ) 
:= \sum_{k=0}^\infty \frac{(a_{1},\ldots ,a_{r};q
) _{k}}{ (b_{1},\ldots ,b_{s};q) _{k}}((-1)^k q^{\binom{k}{2}}
)^{1+s-r}\frac{z^{k}}{(q;q) _{k}}.
\]
\end{definition}
To complete this section we present some useful results we need 
along the paper. 
\begin{proposition}  (Christoffel-Darboux formula). 
Let $\left( p_n\right)$ be a sequence of monic polynomials orthogonal 
with respect to the linear functional 
$\bf u$.  If we denote the $n$-th reproducing kernel by
\[
K_n(x,y):= \sum_{k=0}^{n-1}\frac{p_{k}(
x) p_{k}(y) }{\langle {\bf u}, p^2_k\rangle}.
\]
Then, for all $n\in \mathbb{N}$, 
\begin{equation}
K_n(x,y) =\frac{1}{\langle {\bf u}, p^2_{n-1}\rangle}\frac{
p_{n}(x) p_{n-1}(y) -p_{n}(y) p_{n-1}(x) }{x-y}.  \label{2:2}
\end{equation}
\end{proposition}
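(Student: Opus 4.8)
The plan is to derive \eqref{2:2} from the three-term recurrence relation satisfied by any MOPS, rather than to manipulate the kernel directly. First I would record that, since $(p_n)_n$ is orthogonal with respect to $\bf u$, there exist sequences $(\beta_k)_k$ and $(\gamma_k)_k$ with
\[
p_{k+1}(x)=(x-\beta_k)p_k(x)-\gamma_k p_{k-1}(x),\qquad p_{-1}\equiv 0,\quad p_0\equiv 1,
\]
and where the recurrence coefficient is the ratio of consecutive norms, $\gamma_k=\langle{\bf u},p_k^2\rangle/\langle{\bf u},p_{k-1}^2\rangle$. This last identity is the only place orthogonality enters: pairing the recurrence against $p_{k-1}$ through $\bf u$, the left-hand side and the $\beta_k$ term vanish by orthogonality, while $\langle{\bf u},x\,p_k p_{k-1}\rangle=\langle{\bf u},p_k^2\rangle$ because $x\,p_{k-1}=p_k+(\text{lower degree})$ by the monic normalization, leaving $0=\langle{\bf u},p_k^2\rangle-\gamma_k\langle{\bf u},p_{k-1}^2\rangle$.

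Next I would introduce the antisymmetric combination
\[
F_k(x,y):=p_k(x)p_{k-1}(y)-p_k(y)p_{k-1}(x),
\]
which is precisely the numerator appearing on the right-hand side of \eqref{2:2} when $k=n$. Writing the recurrence once in the variable $x$ and once in $y$, multiplying by $p_k(y)$ and by $p_k(x)$ respectively, and subtracting, the $\beta_k$ contributions cancel and one is left with the two-term relation
\[
F_{k+1}(x,y)=(x-y)\,p_k(x)p_k(y)+\gamma_k\,F_k(x,y).
\]
Dividing through by $\langle{\bf u},p_k^2\rangle$ and substituting $\gamma_k/\langle{\bf u},p_k^2\rangle=1/\langle{\bf u},p_{k-1}^2\rangle$ converts this into the telescoping identity
\[
\frac{F_{k+1}(x,y)}{\langle{\bf u},p_k^2\rangle}-\frac{F_k(x,y)}{\langle{\bf u},p_{k-1}^2\rangle}=(x-y)\,\frac{p_k(x)p_k(y)}{\langle{\bf u},p_k^2\rangle}.
\]

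Finally I would sum over $k=0,1,\ldots,n-1$. The left-hand side collapses to the single surviving term $F_n(x,y)/\langle{\bf u},p_{n-1}^2\rangle$, since the boundary contribution at $k=0$ vanishes because $p_{-1}\equiv 0$ forces $F_0\equiv 0$; the right-hand side is $(x-y)K_n(x,y)$ by the definition of the kernel. Dividing by $x-y$ then gives \eqref{2:2}. The manipulations are essentially bookkeeping, so the only genuine point requiring care is the correct identification of $\gamma_k$ as the ratio $\langle{\bf u},p_k^2\rangle/\langle{\bf u},p_{k-1}^2\rangle$: it is exactly this value that makes the normalized differences telescope, and with any other normalization the sum on the left would fail to collapse.
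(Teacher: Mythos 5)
Your proof is correct: the identification $\gamma_k=\langle{\bf u},p_k^2\rangle/\langle{\bf u},p_{k-1}^2\rangle$, the two-term relation $F_{k+1}(x,y)=(x-y)\,p_k(x)p_k(y)+\gamma_k F_k(x,y)$, and the telescoping sum (with the boundary term harmless since $p_{-1}\equiv 0$ forces $F_0\equiv 0$) together constitute the classical derivation of the Christoffel--Darboux formula. The paper states this proposition without proof, deferring to the standard literature (Chihara's book, which it cites), and the standard proof found there is precisely this recurrence-based telescoping argument, so your approach coincides with the canonical one.
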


Taking into account the inner product we have considered, then it 
is natural to consider the partial $q$-derivatives of $K_n(x,y) $ we
will use the following notation:
\[ 
{\mathscr K}_{q,n}^{(i,j) }(x,y) :=\sum_{k=0}^{n-1}
\frac{({\mathscr D}_{q,x}^{i}\, p_{k})(x) %
({\mathscr D}_{q,y}^{j}\, p_{k})(y) }{\langle {\bf u}, p^2_k\rangle}.  
\] 
The last result we present in this section is a generalization of the 
reproducing  property of the kernel. 

Let $\pi(x)$ be a polynomial of degree $n-1$, it  can 
be written  in terms of elements of the polynomial sequence 
$\left(p_n\right)_n$, i.e., 
\[
\pi(x) =\sum_{k=0}^{n-1}\frac{\langle {\bf u}, \pi(x)\,p_{k}(x) \rangle }{
\langle {\bf u}, p^2_k\rangle}p_{k}(x).
\] 
Thus, we have
\[
({\mathscr D}_{q,y}^{j} \pi)(y) =\sum_{k=0}^{n-1} \frac{\langle {\bf u}, 
\pi(x) p_{k}(x) \rangle }{\langle {\bf u}, p^2_k\rangle}({\mathscr D}_q^{j}p_{k})(y).
\] 
Then, using the fact that
\begin{eqnarray*}
\langle {\bf u}, {\mathscr K}_{q,n}^{(0,j) }(x,y) \pi(x) \rangle &=& 
\sum_{k=0}^{n-1}\langle {\bf u}, \frac{ p_{k}(x)({\mathscr D}_q^{j}p_{k})(y))}
{\langle {\bf u}, p^2_k\rangle} \pi(x) \rangle \\
&=&\sum_{k=0}^{n-1}\frac{\langle {\bf u}, \pi(x)  p_{k}(x) \rangle}
{\langle {\bf u}, p^2_k\rangle}({\mathscr D}_q^{j} p_{k})(y).
\end{eqnarray*}
we obtain the identity
\[
\langle {\bf u}, {\mathscr K}_{q,n}^{(0,j)}(x,y) \pi(x) \rangle
=({\mathscr D}_{q,y}^{j} \pi)(y).
\]
Observe that for $j=0$ one has the reproducing property of the 
kernel, i.e.,
\[ 
\langle {\bf u},  K_n(x,y) \pi(x)\rangle =\pi(y).
\] 
We also need to introduce the following result 
(see \cite[Lemma 1]{BDR-jcam02} or  \cite[Lemma 3]{DMR-anm10})
about the  behaviour of the zeros of a polynomial $f(x)=h_n(x)+cg_n(x)$, 
that is a linear combination of two polynomials of the same degree.
\begin{lemma} \label{Lemm:2.1} 
Let $h_n(x)=H(x-x_{1})\cdots (x-x_n)$ and $
g_n(x)=G(x-y_{1})\cdots (x-y_n)$ be two polynomials with real and simple
zeros, with $H, G>0$.
If 
\[
y_{1}<x_{1}<\cdots <y_n<x_n,
\]
then, for any  $c>0$, the polynomial  $f(x)=h_n(x)+cg_n(x)$
has $n$ real zeros, namely $\eta _{1}\le\cdots \le \eta _n$, 
which interlace with the zeros of $h_n(x)$ and $g_n(x)$ as follows 
\[
y_{1}<\eta _{1}<x_{1}<\cdots <y_n<\eta _n<x_n.
\]  
Moreover, each $\eta _{k}=\eta _{k}(c)$ is a decreasing function of 
$c$ and, for each $k=1,\ldots, n$, we have
\begin{equation*}
\lim_{c\to \infty}\eta _{k}(c)=y_{k}\quad \text{and}\quad
\lim_{c\to \infty}c\big(\eta _{k}(c)-y_{k}\big)=\dfrac{-h_n(y_{k})}{
g_n^{\prime }(y_{k})}.
\end{equation*}
\end{lemma}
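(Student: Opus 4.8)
The plan is to establish the three assertions—existence with interlacing, monotonicity in $c$, and the two limits—by elementary sign analysis combined with the implicit function theorem, without invoking orthogonality at all.

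First I would locate the zeros via the intermediate value theorem. Since $H,G>0$ and $c>0$, the polynomial $f$ has degree $n$ with positive leading coefficient $H+cG$. Evaluating at the interlacing nodes gives $f(y_k)=h_n(y_k)$ and $f(x_k)=c\,g_n(x_k)$, because $g_n(y_k)=0$ and $h_n(x_k)=0$. Counting, under the hypothesis $y_1<x_1<\cdots<y_n<x_n$, how many factors of each product are negative, one finds that $h_n(y_k)$ has sign $(-1)^{n-k+1}$ (namely $k-1$ positive and $n-k+1$ negative factors) while $g_n(x_k)$ has sign $(-1)^{n-k}$ ($k$ positive and $n-k$ negative factors). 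Hence $f(y_k)$ and $f(x_k)$ have opposite signs, so $f$ vanishes at some $\eta_k\in(y_k,x_k)$. These $n$ intervals are pairwise disjoint and ordered, so the $\eta_k$ are distinct and interlace exactly as claimed; since $\deg f=n$, they exhaust the zeros of $f$ and are therefore simple.

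Next I would treat the monotonicity. Because each $\eta_k$ is a simple zero, $f'(\eta_k)\ne0$, and the implicit function theorem applied to $F(x,c)=h_n(x)+c\,g_n(x)=0$ yields
\[
\frac{d\eta_k}{dc}=-\frac{g_n(\eta_k)}{f'(\eta_k)}.
\]
A sign count as above gives $\operatorname{sign}g_n(\eta_k)=(-1)^{n-k}$ (since $y_k<\eta_k<x_k<y_{k+1}$), while $f'(\eta_k)$ has sign $(-1)^{n-k}$ because $f$ has positive leading coefficient and $n$ simple real zeros, so the signs of $f'$ alternate starting from $f'(\eta_n)>0$. The quotient is thus positive and $d\eta_k/dc<0$, so $\eta_k(c)$ is decreasing.

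Finally, for the limits I would rewrite the defining relation as $g_n(\eta_k)=-h_n(\eta_k)/c$. Being decreasing and bounded below by $y_k$, the function $\eta_k(c)$ converges as $c\to\infty$; passing to the limit forces $g_n(\lim\eta_k)=0$, and since $y_k$ is the only zero of $g_n$ in $[y_k,x_k)$ we obtain $\lim_{c\to\infty}\eta_k(c)=y_k$. For the rate, solving for $c=-h_n(\eta_k)/g_n(\eta_k)$ and using $g_n(y_k)=0$ gives
\[
c\big(\eta_k(c)-y_k\big)=-h_n(\eta_k)\,\frac{\eta_k-y_k}{g_n(\eta_k)-g_n(y_k)}.
\]
Letting $\eta_k\to y_k$, the difference quotient tends to $g_n'(y_k)$ and $h_n(\eta_k)\to h_n(y_k)$ by continuity, yielding the stated value $-h_n(y_k)/g_n'(y_k)$. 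I expect the sign bookkeeping in the first two steps to be the only delicate point, since everything else reduces to continuity and the implicit function theorem.
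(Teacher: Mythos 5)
Your proof is correct and complete. Note first that the paper itself does not prove this lemma: it is quoted verbatim from the literature, with pointers to Lemma~1 of Bracciali--Dimitrov--Sri Ranga and Lemma~3 of Dimitrov--Mello--Rafaeli, so there is no internal proof to compare against. Your argument is essentially the standard one used in those references: sign counting at the nodes ($h_n(y_k)$ of sign $(-1)^{n-k+1}$, $g_n(x_k)$ of sign $(-1)^{n-k}$, both of which check out under the strict interlacing hypothesis) plus the intermediate value theorem to place one zero in each of the $n$ disjoint intervals $(y_k,x_k)$, with the degree count forcing simplicity; implicit differentiation of $F(x,c)=h_n(x)+c\,g_n(x)$ giving $d\eta_k/dc=-g_n(\eta_k)/f'(\eta_k)$, where both numerator and denominator carry the sign $(-1)^{n-k}$, so the derivative is negative; and the rewriting $c=-h_n(\eta_k)/g_n(\eta_k)$, which converts the rate statement into a difference quotient for $g_n$ at its simple zero $y_k$. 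All the delicate points are handled: the endpoints cannot be zeros of $f$ because the interlacing is strict, $h_n(\eta_k)$ stays bounded on the compact interval so $g_n(\eta_k)\to 0$, and $g_n'(y_k)\neq 0$ because the zeros of $g_n$ are simple. Two cosmetic remarks: for $k=n$ the inequality $\eta_k<x_k<y_{k+1}$ you invoke in the sign count has no $y_{n+1}$, but the conclusion $g_n(\eta_n)>0$ holds trivially since $\eta_n$ exceeds all the $y_i$; and as a consistency check, your two sign computations give $-h_n(y_k)/g_n'(y_k)>0$, as they must since $\eta_k(c)\to y_k$ from above. In short, your blind reconstruction recovers the proof of the cited sources rather than deviating from the paper, which delegates it entirely.
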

\section{The discrete Sobolev-type polynomials }
We start this section introducing the Sobolev-type inner product
\begin{equation}
\langle f,g \rangle _{S}:=\langle {\bf u}, f g\rangle 
+N (\mathscr D_q f)(\alpha)(\mathscr D _{q}g)(\alpha ),  
\quad N, \alpha\in \mathbb{R}. \label{3:4}
\end{equation}%

We denote by $\left(s^{(\alpha)}_n(N;x)\right)$ the sequence 
of monic polynomials, orthogonal with respect to the inner product 
(\ref{3:4}).  
These polynomials are said to be basic hypergeometric-Sobolev-type 
orthogonal polynomials.

\subsection{Connection formula}

In this section we first express the discrete Sobolev-type polynomials 
$\left(s^{(\alpha)}_n(N;x)\right)$ in terms of the standard orthogonal 
polynomials $(p_n)$ and the kernel polynomials and its corresponding derivatives.
Taking into account the Fourier expansion, i.e., 
\[
s_n^{(\alpha)}(N;x) =p_n(x) +\sum_{k=0}^{n-1}a_{n,k}p_{k}(x).
\]
Then, from the properties of orthogonality of $\left(p_n\right)_n$ and 
$\left(s_n^{(\alpha)}(N;x)\right)_n$ respectively, we have
\begin{equation*}
a_{n,k}=\frac{\langle {\bf u}, s_n^{(\alpha)}(N;x)p_{k}(x)\rangle}{
\langle {\bf u}, p^2_{k}\rangle}=-\frac{N({\mathscr D}_q 
s_n^{(\alpha)})(N;\alpha)({\mathscr D}_q 
p_{k})(\alpha) }{\langle {\bf u}, p^2_{k}\rangle},\ \  0\leq k\leq n-1.
\end{equation*}
Thus we deduce  
\begin{equation} \label{3:5}
s_n^{(\alpha)}(N;x) =p_n(x) -N({\mathscr D}_q s_n^{(\alpha)})(N;\alpha)
{\mathscr K}_{q,n}^{(0,1) }(x,\alpha).
\end{equation}
\begin{remark}
Observe that $s_1^{(\alpha)}(N;x) =p_1(x)$. 
\end{remark}
From here, and after some basic manipulations, one gets
\[
({\mathscr D}_qs_n^{(\alpha)})(N;\alpha) =({\mathscr D}_qp_n)
(\alpha ) -N({\mathscr D}_qs_n^{(\alpha)})(N;\alpha)
{\mathscr K}_{q,n}^{(1,1) }(\alpha ,\alpha ),
\]
Therefore
\[
({\mathscr D}_q s_n^{(\alpha)})(N;\alpha) =\frac{({\mathscr D}_q p_n)(\alpha)}
{1+N{\mathscr K}_{q,n}^{(1,1)}(\alpha ,\alpha)},
\]
so from \eqref{3:5} one has
\begin{equation} \label{3:6}
s_n^{(\alpha)}(N;x) =p_n(x)-\frac{N({\mathscr D}_q p_n)(\alpha)}
{1+N{\mathscr K}_{q,n}^{(1,1) }(\alpha ,\alpha)}{\mathscr K}_{q,n}^{(0,1) }(x,\alpha).  
\end{equation}
\begin{remark}
From now on we denote $\frac{N({\mathscr D}_q p_n)(\alpha)}
{1+N{\mathscr K}_{q,n}^{(1,1)}(\alpha ,\alpha)}$
by $\mathscr C_n$.
\end{remark}
Finally taking into account the identity 
\[
\left({\mathscr D}_q\, \frac f{x-\alpha}\right)(x)=
\frac {({\mathscr D}_q f)(x)}{qx-\alpha}-\frac{f(x)}{(x-\alpha)(qx-\alpha)},
\]
one gets 
\begin{eqnarray}
{\mathscr K}_{q,n}^{(0,1) }(x,\alpha) &=&
\displaystyle \frac{p_n(x) p_{n-1}(\alpha)
-p_{n-1}(x) p_n(\alpha )}{\|p_{n-1}\|^2\,(x-\alpha)(x-q\alpha)} \nonumber\\
&&\displaystyle+\frac{p_n(x) ({\mathscr D}_qp_{n-1})(\alpha)
-p_{n-1}(x) ({\mathscr D}_qp_n)(\alpha)}{\|p_{n-1}\|^2\,(x-q\alpha)}.
\label{3:7}
\end{eqnarray}
With this, and by using expression \eqref{3:6}, one obtains
\begin{eqnarray}
s_n^{(\alpha)}(N;x) =p_n(x)-\frac{\mathscr C_n}{\| p_{n-1}\|^2}
\Bigg(\frac{p_n(x) p_{n-1}(\alpha )-p_{n-1}(x) p_n(\alpha ) }{(x-\alpha)
(x-q\alpha)} \nonumber \\ +\frac{p_n(x) ({\mathscr D}_qp_{n-1})(\alpha)
-p_{n-1}(x) ({\mathscr D}_qp_n)(\alpha)}{(
x-q\alpha )}\Bigg). \label{3:8}
\end{eqnarray}
Since the expression \eqref{3:6} is a rational function in $N$, 
where both the numerator and denominator has the same degree as $N$, 
it is clear that we can define the monic polynomial of 
degree $n$ which results on taking the limit $N\to \infty$. 
In fact, we obtain
\begin{equation}  \label{3:9}
r_{n}^{(\alpha)}(x):=\lim_{N\to\infty}s_n^{(\alpha)}(N;x)=p_n(x)
-\frac{({\mathscr D}_q p_n)(\alpha)}{{\mathscr K}_{q,n}^{(1,1)}(\alpha ,
\alpha)}{\mathscr K}_{q,n}^{(0,1)}(x,\alpha). 
\end{equation}
To characterize these new polynomials, first we observe that they are
strictly quasi-orthogonal of order $2$ with respect to the linear functional
\begin{equation*}
{\bf v}=(x-\alpha)(x-q\alpha){\bf u},
\end{equation*}
therefore, $r_{n}^{(\alpha)}(x)$ is a linear
combination of three consecutive polynomials of the sequence 
$(p_{n})$, i.e., for $n\geq 2$, we have
\[
r_{n}^{(\alpha)}(x)=p_n(x)+b_{q,n}p_{n-1}(x)+c_{q,n}p_{n-2}(x),
\]
where 
\[
c_{q,n}=\frac{\langle {\bf u}, r_{n}^{(\alpha)}(x)
p_{n-2}(x)\rangle}{\langle {\bf u}, p^2_{n-2}\rangle}=
-\frac{\left(({\mathscr D}_q p_n)(\alpha)\right)^2}{{\mathscr 
K}_{q,n}(\alpha,\alpha)}<0.
\]
\subsection{Distribution of the zeros}
Let $\left(\eta _{n,k}\right)_{k=1}^n$ be the zeros of $s_n^{(\alpha)}(N;x)$ 
and $\left(x_{n,k}\right)_{k=1}^n$ be the zeros of  $p_n(x)$. 
Then the following result holds.
\begin{proposition} \label{Prop:3.1}
(\cite{MPP-RdM92}, Proposition 6.2) The polynomial $r_n^{(\alpha)}(x)$ has 
$n$ real and simple zeros, namely $\left(y_{n,k}\right)_{k=1}^n$. \\
Moreover, if $\alpha<\text{supp}(\bf u)$, then 
\[
y_{n,1}<\alpha <x_{n,1}<y_{n,2}<x_{n,2}<\cdots <y_{n,n}<x_{n,n},
\]
and if $\alpha>\text{supp}(\bf u)$, then 
\[
x_{n,1}<y_{n,1}<x_{n,2}<\cdots <y_{n,n-1}<x_{n,n}<\alpha<y_{n,n},
\]
hold for every $n\geq 2$.
\end{proposition}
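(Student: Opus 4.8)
The plan is to reduce the statement to the known theory of zeros of polynomials quasi-orthogonal of order two, verifying its hypotheses one by one. Lemma~\ref{Lemm:2.1} is tailored to the finite-mass analysis of $s_n^{(\alpha)}(N;x)$, so for the limit polynomial $r_n^{(\alpha)}$ the natural engine is instead the quasi-orthogonality with respect to $\mathbf v=(x-\alpha)(x-q\alpha)\mathbf u$ already isolated in the text, together with the three-term expansion $r_n^{(\alpha)}=p_n+b_{q,n}p_{n-1}+c_{q,n}p_{n-2}$ with $c_{q,n}<0$.

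First I would establish that $\mathbf v$ is positive-definite. Writing $\langle \mathbf v, g^2\rangle=\langle \mathbf u,(x-\alpha)(x-q\alpha)g^2\rangle$, this amounts to showing that the quadratic $(x-\alpha)(x-q\alpha)$, whose roots are $\alpha$ and $q\alpha$, is strictly positive on $\mathrm{supp}(\mathbf u)$. Here the hypothesis $\alpha<\mathrm{supp}(\mathbf u)$ (resp.\ $\alpha>\mathrm{supp}(\mathbf u)$) must be read so that both roots lie on the same side of the support; since $0<q<1$, the root $q\alpha$ lies between $\alpha$ and $0$, and I would check, according to $\mathrm{sign}\,\alpha$, that it does not fall inside the support, so that indeed $(x-\alpha)(x-q\alpha)>0$ there. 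This positivity is the foundational and most $q$-specific point, and I expect the careful placement of $q\alpha$ relative to the support to be the first technical hurdle.

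Granted positive-definiteness, and since the text already shows $r_n^{(\alpha)}$ is quasi-orthogonal of order exactly two with respect to $\mathbf v$ with $c_{q,n}<0$, I would invoke the standard result that such a polynomial has $n$ real and simple zeros, of which at least $n-1$ lie in the interior of the convex hull of $\mathrm{supp}(\mathbf u)$. The interlacing with the zeros $x_{n,k}$ of $p_n$ I would obtain by evaluating $r_n^{(\alpha)}$ at each $x_{n,k}$: since $p_n(x_{n,k})=0$, the three-term recurrence $p_n=(x-\beta_{n-1})p_{n-1}-\gamma_{n-1}p_{n-2}$ gives $r_n^{(\alpha)}(x_{n,k})=p_{n-1}(x_{n,k})\bigl(b_{q,n}+c_{q,n}(x_{n,k}-\beta_{n-1})/\gamma_{n-1}\bigr)$, whose sign is controlled by the known alternation $\mathrm{sign}\,p_{n-1}(x_{n,k})=(-1)^{n-k}$ together with the single possible sign change of the linear factor in brackets. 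The one place where this linear factor can spoil the alternation accounts for exactly one defect in the interlacing, i.e.\ the single escaping zero, while the remaining consecutive sign changes produce the $n-1$ interlacing zeros $x_{n,k}<y_{n,k+1}<x_{n,k+1}$ (resp.\ $x_{n,k}<y_{n,k}<x_{n,k+1}$).

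It remains to locate the escaping zero strictly beyond $\alpha$, and this I expect to be the crux. Reality and counting only place it outside the support, so to obtain $y_{n,1}<\alpha$ (resp.\ $y_{n,n}>\alpha$) I would run a refined sign argument on the interval between $\alpha$ and the nearest zero $x_{n,1}$ (resp.\ $x_{n,n}$): using the closed forms \eqref{3:9} and \eqref{3:7} I would read off, from $\alpha<\mathrm{supp}(\mathbf u)$, the signs of $p_n(\alpha)$, $(\mathscr D_q p_n)(\alpha)$, $\mathscr K_{q,n}^{(0,1)}(\alpha,\alpha)$ and $\mathscr K_{q,n}^{(1,1)}(\alpha,\alpha)$ — for instance $\mathscr K_{q,n}^{(1,1)}(\alpha,\alpha)>0$ is immediate, while each summand of $\mathscr K_{q,n}^{(0,1)}(\alpha,\alpha)$ is non-positive, carrying sign $(-1)^{2k+1}$ — and combine them with the monic growth $\mathrm{sign}\,r_n^{(\alpha)}(-\infty)=(-1)^n$ to force a sign change of $r_n^{(\alpha)}$ on $(-\infty,\alpha)$ and none on $(\alpha,x_{n,1})$. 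Since a bare evaluation of $r_n^{(\alpha)}(\alpha)$ turns out to be sign-indeterminate, the delicate part is to compare $r_n^{(\alpha)}$ with $p_n$ throughout $(\alpha,x_{n,1})$, via the sign of the correcting kernel term in \eqref{3:9}, rather than at the single point $\alpha$; this comparison, matching the argument of \cite[Proposition~6.2]{MPP-RdM92}, is where the real work lies.
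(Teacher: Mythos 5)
The paper never actually proves this proposition --- it is imported from \cite[Proposition 6.2]{MPP-RdM92} --- so your attempt must stand on its own; its skeleton does match the machinery behind the cited source. The quasi-orthogonality route is viable: with $c_{q,n}<0$, evaluating $r_n^{(\alpha)}$ at the zeros of $p_{n-1}$ (via $p_n(x_{n-1,k})=-\gamma_{n-1}p_{n-2}(x_{n-1,k})$, so $r_n^{(\alpha)}(x_{n-1,k})=(c_{q,n}-\gamma_{n-1})\,p_{n-2}(x_{n-1,k})$, which alternates in sign) already yields $n$ real simple zeros, and your evaluation at the zeros of $p_n$ gives the interlacing up to one defect. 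Two caveats, though: ``order-two quasi-orthogonality $\Rightarrow$ $n$ real simple zeros with $n-1$ in the hull'' is \emph{not} a standard theorem (order two alone gives only $n-2$ zeros in the support), so your $c_{q,n}<0$ computation is doing all the work there; and positivity of $(x-\alpha)(x-q\alpha)$ on $\mathrm{supp}(\mathbf u)$ cannot be ``checked'' from $\alpha<\mathrm{supp}(\mathbf u)$ alone --- for $\mathrm{supp}(\mathbf u)=[-1,1]$, $\alpha=-1.1$, $q=1/2$ one gets $q\alpha=-0.55$ in the interior --- it must be invoked as the paper's standing assumption that $(\alpha,q\alpha)$ contains no points of the spectrum.

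The genuine gap is the step you yourself flag as ``where the real work lies'', and the mechanism you propose there cannot be completed. Since the smallest zeros $x_{k,1}$ decrease as the degree grows, every $x<x_{n,1}$ lies below all zeros of every $p_k$, $k\le n-1$; hence $\mathrm{sign}\,p_k(x)=(-1)^k$ while $\mathrm{sign}\,(\mathscr D_qp_k)(\alpha)=(-1)^{k-1}$ for $\alpha<\mathrm{supp}(\mathbf u)$, so $\mathscr K^{(0,1)}_{q,n}(x,\alpha)<0$ on the \emph{whole} half-line $(-\infty,x_{n,1})$, and in \eqref{3:9} the correction $-\frac{(\mathscr D_qp_n)(\alpha)}{\mathscr K^{(1,1)}_{q,n}(\alpha,\alpha)}\mathscr K^{(0,1)}_{q,n}(x,\alpha)$ carries the fixed sign $(-1)^{n-1}$, opposite to $\mathrm{sign}\,p_n(x)=(-1)^n$, throughout that half-line and not only at $x=\alpha$. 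Thus no sign bookkeeping of the two competing terms of \eqref{3:9} or \eqref{3:7}, however refined, can decide on which side of $\alpha$ the balance tips; a quantitative identity is required. That identity --- the exact engine of \cite[Proposition 6.2]{MPP-RdM92}, where it reads $R_n'(c)=0$ --- is $(\mathscr D_qr_n^{(\alpha)})(\alpha)=0$, immediate from \eqref{3:9} because $\mathscr D_{q,x}$ applied to $\mathscr K^{(0,1)}_{q,n}(x,\alpha)$ at $x=\alpha$ produces $\mathscr K^{(1,1)}_{q,n}(\alpha,\alpha)$. Equivalently $r_n^{(\alpha)}(\alpha)=r_n^{(\alpha)}(q\alpha)$, and this pins the escaping zero: once reality and the one-defect interlacing are in place, all zeros except possibly $y_{n,1}$ exceed $x_{n,1}>\max(\alpha,q\alpha)$; if $y_{n,1}>\max(\alpha,q\alpha)$ too, then $\alpha$ and $q\alpha$ lie on the same side of every zero at different distances, so $|r_n^{(\alpha)}(\alpha)|\neq|r_n^{(\alpha)}(q\alpha)|$ with equal signs $(-1)^n$, contradicting the equality; if instead $y_{n,1}\in[\alpha,\max(\alpha,q\alpha)]$, the two values differ in sign or exactly one vanishes, again a contradiction. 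Hence $y_{n,1}<\alpha$, with the case $\alpha>\mathrm{supp}(\mathbf u)$ symmetric. Without this ingredient your outline proves reality, simplicity and interlacing with one defect, but not the assertions $y_{n,1}<\alpha$ and $y_{n,n}>\alpha$ that are the point of the proposition.
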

In order to obtain some results concerning monotonicity, asymptotics, and 
speed of convergence for the zeros in terms of the mass $N$ applying 
Lemma \ref{Lemm:2.1} we believe that is more convenient to normalize 
the connection formula \eqref{3:6} in this useful way:
\begin{proposition}
For this polynomial sequence the following identity holds:
\begin{equation}
\left(1+N{\mathscr K}_{q,n}^{(1,1)}(\alpha ,\alpha )\right)s_n^{(\alpha)}(N;x)=p_n(x)
+N{\mathscr K}_{q,n}^{(1,1)}(\alpha ,\alpha) r_{n}^{(\alpha)}(x).
\label{3:10}
\end{equation}
\end{proposition}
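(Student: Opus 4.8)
The plan is to derive \eqref{3:10} directly from the two connection formulas already established, namely \eqref{3:6} for $s_n^{(\alpha)}(N;x)$ and \eqref{3:9} for $r_n^{(\alpha)}(x)$. The crucial structural observation is that both polynomials are written as $p_n(x)$ minus a scalar multiple of \emph{the same} kernel polynomial ${\mathscr K}_{q,n}^{(0,1)}(x,\alpha)$; only the scalar prefactors differ. This common factor is what makes the asserted linear combination collapse to $p_n(x)$ plus a kernel term with matching coefficients.

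Concretely, I would abbreviate $A:=({\mathscr D}_q p_n)(\alpha)$, $K:={\mathscr K}_{q,n}^{(1,1)}(\alpha,\alpha)$ and $L(x):={\mathscr K}_{q,n}^{(0,1)}(x,\alpha)$, so that \eqref{3:6} and \eqref{3:9} become
\[
s_n^{(\alpha)}(N;x)=p_n(x)-\frac{NA}{1+NK}\,L(x),\qquad
r_n^{(\alpha)}(x)=p_n(x)-\frac{A}{K}\,L(x).
\]
Multiplying the first identity by $(1+NK)$ clears the denominator and yields $(1+NK)\,s_n^{(\alpha)}(N;x)=(1+NK)\,p_n(x)-NA\,L(x)$. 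On the other hand, substituting the second identity gives $p_n(x)+NK\,r_n^{(\alpha)}(x)=p_n(x)+NK\,p_n(x)-NA\,L(x)=(1+NK)\,p_n(x)-NA\,L(x)$, where the term $NK\cdot\frac{A}{K}\,L(x)=NA\,L(x)$ is precisely where the cancellation of $K$ occurs. Comparing the two right-hand sides establishes \eqref{3:10}.

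Since the argument is a one-line algebraic identity, there is no genuine analytic obstacle. The only points that require a word of care are that the prefactor $1+N{\mathscr K}_{q,n}^{(1,1)}(\alpha,\alpha)$ must be nonzero for \eqref{3:6} to define $s_n^{(\alpha)}(N;x)$, and that ${\mathscr K}_{q,n}^{(1,1)}(\alpha,\alpha)\neq 0$ is needed for \eqref{3:9} to define $r_n^{(\alpha)}(x)$; both are guaranteed by the construction leading to those formulas for $n\geq 2$ (in the positive-definite case ${\mathscr K}_{q,n}^{(1,1)}(\alpha,\alpha)$ is a sum of squares divided by positive norms, whose $k=1$ term alone is strictly positive). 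One may alternatively present the identity without abbreviations by clearing the denominator in \eqref{3:6} and recognizing $\frac{NA}{1+NK}$ and $\frac{A}{K}$ as the respective kernel coefficients, but the bookkeeping is identical.
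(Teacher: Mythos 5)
Your proof is correct and follows essentially the same route the paper intends: the paper explicitly leaves the proof to the reader, noting it suffices to substitute \eqref{3:9} into \eqref{3:10} and perform basic manipulations, which is exactly your computation of clearing the denominator in \eqref{3:6} and matching both sides against $(1+N{\mathscr K}_{q,n}^{(1,1)}(\alpha,\alpha))\,p_n(x)-N({\mathscr D}_q p_n)(\alpha)\,{\mathscr K}_{q,n}^{(0,1)}(x,\alpha)$. Your additional remark on the nonvanishing of ${\mathscr K}_{q,n}^{(1,1)}(\alpha,\alpha)$ and of $1+N{\mathscr K}_{q,n}^{(1,1)}(\alpha,\alpha)$ is a welcome (if implicit in the paper) point of care, but it does not change the substance of the argument.
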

We leave the proof to the reader, it is just enough to replace \eqref{3:9}
in \eqref{3:10} and after some basic manipulations 
one gets the desired identity.

We point out the fact the basic hypergeometric-Sobolev-type orthogonal
polynomials $s_n^{(\alpha)}(N;x) $ appears as a linear
combination of two polynomials of degree $n$. 
Thus, from \eqref{3:10},  Proposition \ref{Prop:3.1}, and Lemma 
\ref{Lemm:2.1}, we immediately conclude that the mass point $\alpha$ 
does not attract any zero of  $s^{(\alpha)}_n(N;x)$ 
when $N\to \infty$, as in the standard case. 
By standard we mean the case of the polynomials orthogonal with
respect to the inner product \eqref{3:4} (see \cite{HuerMarcRaf10}).

Moreover, it is well-known that the polynomial $p_n(x)$ has $n$ different 
real zeros  and since we assumed that the interval $(\alpha, q \alpha)$ 
contains no points of the spectrum of $\bf u$, it follows that at most one 
zero of $p_n(x)$ is  situated in $[\alpha, q\alpha]$. 
For the polynomials $s_n^{(\alpha)}(N;x)$ we have the following result.
\begin{proposition}
If $n\ge 3$ the polynomial $s_n^{(\alpha)}(N;x)$ has at least $n-2$ 
different real zeros with odd multiplicity.
\end{proposition}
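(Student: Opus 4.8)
The plan is to argue by contradiction, combining the Sobolev-orthogonality of $s_n^{(\alpha)}(N;x)$ with a classical sign argument, where the factor $(x-\alpha)(x-q\alpha)$ is inserted precisely to annihilate the mass term of the inner product. First I would let $\xi_1<\cdots<\xi_m$ be the points lying in the interior of $\text{supp}({\bf u})$ at which $s_n^{(\alpha)}(N;x)$ changes sign, i.e. its zeros of odd multiplicity inside the support. These are finitely many, and the goal is to show $m\ge n-2$; so assume, for contradiction, that $m\le n-3$.

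Next I would build the test polynomial
\[
\pi(x)=(x-\alpha)(x-q\alpha)\prod_{i=1}^m(x-\xi_i),\qquad \deg\pi=m+2\le n-1.
\]
The crucial observation is that $\pi(\alpha)=\pi(q\alpha)=0$, whence, from the definition of the $q$-derivative,
\[
({\mathscr D}_q\pi)(\alpha)=\frac{\pi(q\alpha)-\pi(\alpha)}{(q-1)\alpha}=0
\]
(and likewise $\pi'(0)=0$ in the degenerate case $\alpha=0$, where $(x-\alpha)(x-q\alpha)=x^2$). Consequently the mass term in \eqref{3:4} drops out, and since $\deg\pi\le n-1$ the orthogonality of $s_n^{(\alpha)}(N;x)$ with respect to $\langle\cdot,\cdot\rangle_S$ yields
\[
0=\langle s_n^{(\alpha)}(N;x),\pi(x)\rangle_S=\langle {\bf u},\, s_n^{(\alpha)}(N;x)\pi(x)\rangle,
\]
the last equality holding because $({\mathscr D}_q\pi)(\alpha)=0$.

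On the other hand, I would examine the sign of the integrand on $\text{supp}({\bf u})$. By the choice of the $\xi_i$, the product $s_n^{(\alpha)}(N;x)\prod_{i=1}^m(x-\xi_i)$ does not change sign there, while the quadratic $(x-\alpha)(x-q\alpha)$ is non-negative on the support, since the open interval with endpoints $\alpha$ and $q\alpha$ carries no point of the spectrum of $\bf u$. Hence $s_n^{(\alpha)}(N;x)\pi(x)$ keeps a constant sign on $\text{supp}({\bf u})$ and, being a nonzero polynomial, cannot vanish identically on the infinite support. As $\bf u$ is represented by a positive measure, this forces $\langle {\bf u},\, s_n^{(\alpha)}(N;x)\pi(x)\rangle\ne 0$, contradicting the previous display. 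Therefore $m\ge n-2$, which is the assertion; note that the hypothesis $n\ge 3$ is exactly what guarantees $\deg\pi=m+2\le n-1$ already in the worst case $m=0$.

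The genuinely delicate point is the sign argument, not the algebra: one must ensure that $(x-\alpha)(x-q\alpha)$ is single-signed on the support (which is where the assumption that $(\alpha,q\alpha)$ meets no mass of $\bf u$ enters) and that positivity of the underlying measure makes the pairing strictly nonzero—for the discrete functionals this amounts to the fact that a sum of a single-signed, not-identically-zero sequence against positive weights cannot vanish. The degree bookkeeping and the vanishing of $({\mathscr D}_q\pi)(\alpha)$ are routine.
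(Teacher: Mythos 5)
Your proof is correct and is essentially the paper's own argument: the paper likewise forms the product $\Pi(x)$ of the odd-multiplicity real zeros, pairs $s_n^{(\alpha)}(N;x)$ with $(x-\alpha)(x-q\alpha)\Pi(x)$ so that the mass term vanishes (since the $q$-derivative of that product at $\alpha$ is zero), and concludes from the strict positivity of $\langle {\bf u},\, s_n^{(\alpha)}(N;x)(x-\alpha)(x-q\alpha)\Pi(x)\rangle$ that $\deg\Pi\ge n-2$. The only difference is presentational—you argue by contradiction and spell out the degree bookkeeping, the $\alpha=0$ degenerate case, and the single-signedness of $(x-\alpha)(x-q\alpha)$ on the support, all of which the paper's two-line proof leaves implicit.
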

\begin{proof}
Let $\eta_{n,1}, \eta_{n,2}, \cdots, \eta_{n,k}$ denote the real zeros of 
$s_n^{(\alpha)}(N;x)$ of odd multiplicity. 
Put $\Pi(x)=(x-\eta_{n,1})\cdots (x-\eta_{n,k})$ We have 
\[
\langle s_n^{(\alpha)}(N;x),(x-\alpha)(x-q \alpha) \Pi(x)\rangle_S=
\langle {\bf u}, s_n^{(\alpha)}(N;x)(x-\alpha)(x-q \alpha) \Pi(x)\rangle>0.
\]
Hence $\deg \Pi\ge n-2$.
\end{proof}

The position of the real zeros of $s_n^{(\alpha)}(N;x)$ can be localized 
by the following theorem.
\begin{theorem} \label{theo:3.2}
Suppose $({\mathscr D}_q p_n)(\alpha)=0$, $N>0$. Let $k$ denote 
the intersection with the real axis of the chord which joins the points 
$(\alpha, p_n(\alpha))$ and $(q \alpha,p_n(q\alpha))$. 
\begin{enumerate}
\item If $k\not \in [x_{n,i},x_{n,i+1}]$ and $x_{n,i}$, $x_{n,i+1} \not \in 
[\alpha, q\alpha]$, then $(x_{n,i},x_{n,i+1})$ contains at least one zero 
of $s_n^{(\alpha)}(N;x)$.
\item If there exists a unique $0\le i\le n$ such that 
$\alpha<x_{n,i}<q\alpha$, then 
also $\alpha<k<q\alpha$ and  we have one of the following cases:
\begin{enumerate}
\item If $\alpha<k<x_{n,i}<q\alpha$ then $(x_{n,i-1},x_{n,i})$ contains at 
least one zero of $s_n^{(\alpha)}(N;x)$.
\item If $\alpha<x_{n,i}<k<q\alpha$ then $(x_{n,i},x_{n,i+1})$ 
contains at least one zero of $s_n^{(\alpha)}(N;x)$.
\end{enumerate}
\end{enumerate}
\end{theorem}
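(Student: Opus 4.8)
The plan is to locate the zeros of $s_n^{(\alpha)}(N;x)$ by evaluating it at the zeros $x_{n,1}<\cdots<x_{n,n}$ of $p_n$ and tracking sign changes between consecutive nodes. Starting from the explicit connection formula \eqref{3:8} and using $p_n(x_{n,i})=0$, every term carrying a factor $p_n(x)$ drops out, leaving
\[
s_n^{(\alpha)}(N;x_{n,i})=\frac{\mathscr C_n}{\|p_{n-1}\|^2}\,\frac{p_{n-1}(x_{n,i})}{x_{n,i}-q\alpha}\left(\frac{p_n(\alpha)}{x_{n,i}-\alpha}+({\mathscr D}_q p_n)(\alpha)\right).
\]
First I would record this identity, noting that by the interlacing of the zeros of $p_n$ and $p_{n-1}$ the factor $p_{n-1}(x_{n,i})$ is nonzero and changes sign as $i$ increases by one.

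The next step is to interpret the bracketed factor through the chord. By the definition of the $q$-derivative, the slope of the chord joining $(\alpha,p_n(\alpha))$ and $(q\alpha,p_n(q\alpha))$ is exactly $({\mathscr D}_q p_n)(\alpha)$, so its intersection $k$ with the real axis satisfies $({\mathscr D}_q p_n)(\alpha)=-p_n(\alpha)/(k-\alpha)$. Substituting this into the bracket gives
\[
\frac{p_n(\alpha)}{x_{n,i}-\alpha}+({\mathscr D}_q p_n)(\alpha)=\frac{p_n(\alpha)\,(k-x_{n,i})}{(x_{n,i}-\alpha)(k-\alpha)},
\]
whence
\[
s_n^{(\alpha)}(N;x_{n,i})=\frac{\mathscr C_n\,p_n(\alpha)}{\|p_{n-1}\|^2\,(k-\alpha)}\,\frac{p_{n-1}(x_{n,i})\,(k-x_{n,i})}{(x_{n,i}-\alpha)(x_{n,i}-q\alpha)}.
\]
The prefactor is independent of $i$, so the sign of $s_n^{(\alpha)}(N;x_{n,i})$ is governed entirely by $p_{n-1}(x_{n,i})$ together with the three linear factors $k-x_{n,i}$, $x_{n,i}-\alpha$ and $x_{n,i}-q\alpha$.

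For part 1, over $[x_{n,i},x_{n,i+1}]$ the factor $p_{n-1}$ changes sign exactly once; the hypothesis $k\notin[x_{n,i},x_{n,i+1}]$ keeps $k-x$ of constant sign, while $x_{n,i},x_{n,i+1}\notin[\alpha,q\alpha]$ forces $\alpha$ and $q\alpha$ to lie either both inside or both outside $(x_{n,i},x_{n,i+1})$, so that the product $(x-\alpha)(x-q\alpha)$ keeps a constant sign on the interval. Hence the only sign flip comes from $p_{n-1}$, the endpoint values of $s_n^{(\alpha)}(N;x)$ are of opposite sign, and the intermediate value theorem yields a zero in $(x_{n,i},x_{n,i+1})$. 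For part 2, a unique zero $x_{n,i}\in(\alpha,q\alpha)$ means $p_n$ changes sign once there, so $p_n(\alpha)$ and $p_n(q\alpha)$ have opposite signs and the chord crosses the axis strictly between them, giving $\alpha<k<q\alpha$. I would then split according to whether $k<x_{n,i}$ or $k>x_{n,i}$ and evaluate the four factors at the relevant pair of neighbouring nodes ($x_{n,i-1},x_{n,i}$ in sub-case (a); $x_{n,i},x_{n,i+1}$ in sub-case (b)), checking in each case that exactly one extra factor flips sign beyond $p_{n-1}$, so that the endpoint values of $s_n^{(\alpha)}(N;x)$ are again of opposite sign.

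The main obstacle is the sign bookkeeping in part 2: one must first secure $\alpha<k<q\alpha$, and then keep straight the ordering $x_{n,i-1}<\alpha<k<x_{n,i}<q\alpha<x_{n,i+1}$ (or its case-(b) analogue), remembering that $0<q<1$ fixes the position of $q\alpha$ relative to $\alpha$ according to the sign of $\alpha$. One also has to guarantee that the relevant quantities — in particular $\mathscr C_n$, $p_n(\alpha)$ and the values of $p_{n-1}$ at the nodes — are nonzero, so that the sign changes are strict and the intermediate value argument applies.
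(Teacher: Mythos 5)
Your proof is correct and follows essentially the same route as the paper, which omits its own argument and defers to Bavinck's Theorem~3.2 in \cite{bav1}: evaluate the connection formula \eqref{3:8} at the zeros of $p_n$, rewrite the resulting bracket via the chord intersection $k$ using $({\mathscr D}_q p_n)(\alpha)=-p_n(\alpha)/(k-\alpha)$, and count sign changes using the interlacing of the zeros of $p_{n-1}$ with those of $p_n$. Two small repairs: the hypothesis as printed, $({\mathscr D}_q p_n)(\alpha)=0$, must be a misprint for $({\mathscr D}_q p_n)(\alpha)\ne 0$ (otherwise $\mathscr C_n=0$, so $s_n^{(\alpha)}(N;x)=p_n(x)$, and the chord is horizontal so $k$ does not exist), which your chord-slope identity tacitly assumes; and your claim that $(x-\alpha)(x-q\alpha)$ keeps a constant sign on all of $[x_{n,i},x_{n,i+1}]$ fails when $[\alpha,q\alpha]\subset(x_{n,i},x_{n,i+1})$, but this is harmless because your argument only uses the sign of that quadratic at the two endpoints, where it is positive in every configuration permitted by the hypotheses.
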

The proof of this result is analogue to the one presented in 
\cite[Theorem 3.2]{bav1} so we leave it to the reader.
\begin{remark}
Notice that depending on the sign of $\alpha$ the interval changes but 
the result is clear in both cases $\alpha>0$ and $\alpha<0$. 
\end{remark}
\begin{corollary}
The position of at least $n-2$ zeros of $s_n^{(\alpha)}(N;x)$ can be 
localized.
\end{corollary}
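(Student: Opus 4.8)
The plan is to turn Theorem~\ref{theo:3.2} into a purely combinatorial bookkeeping argument: I would apply it to each of the consecutive intervals cut out by the zeros of $p_n$ and then count how many zeros of $s_n^{(\alpha)}(N;x)$ the procedure pins down. First I would record the data already in hand: $p_n$ has $n$ simple real zeros $x_{n,1}<\cdots<x_{n,n}$, which partition the line into the $n-1$ bounded open gaps $(x_{n,i},x_{n,i+1})$, $i=1,\ldots,n-1$ (plus two unbounded end rays). I would also reuse the standing assumption from the text that $[\alpha,q\alpha]$ contains at most one point of the spectrum of ${\bf u}$, hence at most one zero of $p_n$. The argument then splits according to how many zeros of $p_n$ lie in $[\alpha,q\alpha]$.

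In the first case $[\alpha,q\alpha]$ contains no zero of $p_n$. Then every gap $(x_{n,i},x_{n,i+1})$ has both endpoints outside $[\alpha,q\alpha]$, so the hypothesis $x_{n,i},x_{n,i+1}\notin[\alpha,q\alpha]$ of part~1 of Theorem~\ref{theo:3.2} is automatic. The chord intersection $k$ is a single real number and therefore lies in at most one gap, so the remaining hypothesis $k\notin[x_{n,i},x_{n,i+1}]$ fails for at most one value of $i$. Hence part~1 applies to at least $(n-1)-1=n-2$ gaps, and since these gaps are pairwise disjoint it exhibits at least $n-2$ distinct localized zeros of $s_n^{(\alpha)}(N;x)$.

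In the second case $[\alpha,q\alpha]$ contains exactly one zero $x_{n,j}$ of $p_n$, so $\alpha<x_{n,j}<q\alpha$ and we are in the setting of part~2, which already localizes at least one zero in one of the two gaps adjacent to $x_{n,j}$, namely $(x_{n,j-1},x_{n,j})$ or $(x_{n,j},x_{n,j+1})$. For each of the remaining $n-3$ gaps both endpoints lie outside $[\alpha,q\alpha]$ (because $x_{n,j}$ is the only interior zero), and since $x_{n,j-1}<\alpha<k<q\alpha<x_{n,j+1}$ the point $k$ sits in one of the two adjacent gaps and in none of the others; thus both hypotheses of part~1 hold and each of these $n-3$ gaps yields a further zero. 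Adding the zero from part~2 gives $(n-3)+1=n-2$ localized zeros, again in pairwise disjoint intervals. This matches the lower bound of the preceding proposition (at least $n-2$ real zeros of odd multiplicity), which is a reassuring consistency check.

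The step I expect to be delicate is the boundary bookkeeping when the exceptional zero sits at an end, i.e.\ $j=1$ or $j=n$, so that one of the two ``adjacent'' gaps degenerates into an unbounded ray; one must then read the corresponding interval in part~2 as the appropriate half-line and verify that the count $n-2$ is not lost. A secondary point to check carefully is that the endpoints $x_{n,i}$ never fall exactly on $\alpha$ or $q\alpha$ in case~1 (so that ``outside $[\alpha,q\alpha]$'' really is strict), and that the single gap excluded by the condition on $k$ is not double-counted against the gap excluded by the $[\alpha,q\alpha]$ condition in case~2. Once these edge configurations are disposed of, the disjointness of the intervals delivers the claimed $n-2$ distinct localized zeros and completes the proof.
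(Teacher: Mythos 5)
Your proof is correct and follows essentially the same route as the paper's: the same case split on whether $[\alpha,q\alpha]$ contains a zero of $p_n(x)$, applying part~1 of Theorem~\ref{theo:3.2} to the gaps $(x_{n,i},x_{n,i+1})$ (at least $n-2$ of them in the first case, $n-3$ in the second) and part~2 for the one extra zero in the second case. You merely spell out the counting details the paper leaves implicit, namely that the chord point $k$ can invalidate at most one gap and that in the second case $k$ necessarily lies in a gap adjacent to the exceptional zero.
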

\begin{proof} We consider different cases:
\begin{itemize}
\item If $[\alpha,q\alpha]$ does not contain a zero of $p_n(x)$, since 
$x_{n,i}\ne \alpha$ for all $i$, then for at least $n-2$ intervals 
$(x_{n,i},x_{n,i+1})$ part 1. of Theorem \ref{theo:3.2} can be applied.
\item If $(\alpha,q\alpha)$ contains a zero of $p_n(x)$ (more than 
one is impossible), then $n-3$ zeros of $s_n^{(\alpha)}(N;x)$ can be 
localized by the first part  of Theorem \ref{theo:3.2} and one by the 
second part. 
\end{itemize}
Hence the result follows. \end{proof}

\subsection{The five-term recurrence relation}
Since the inner product \eqref{3:4} does not satisfy the 
Hankel property , i.e., 
\[
\langle x\, f, g\rangle_S = \langle f, x\, g\rangle_S,
\] 
the polynomial sequence $\left(s_n^{(\alpha)}(N;x)\right)_n$ 
does not fulfil a three-term recurrence relation. 
However, we find that 
\[
\langle (x-\alpha)(x-\alpha q) p, r\rangle_S 
= \langle p, (x-\alpha)(x-\alpha q) r\rangle_S,\quad 
p, r\in \mathbb P.
\]
Let us state the first result which is a direct consequence of 
this fact.
\begin{proposition}  
 The following identity holds for $n\ge 2$:
\begin{equation} \label{3:11}
(x-\alpha)(x-\alpha q)s_n^{(\alpha)}(N;x)=\sum_{\nu=n-2}^{n+2} 
a_{n,\nu} p_\nu(x),
\end{equation}
where $a_{n,n+2}=1$, and
\begin{eqnarray*}
a_{n,n+1}&=& \big(\beta_{n+1}+\beta_n-\alpha(1+q)\big)
-\frac{{\mathscr C}_n}
{\langle {\bf u}, p^2_{n-1}\rangle}
\left({\mathscr D}_q p_{n-1}\right)(\alpha), \\
a_{n,n}&=& \big(\gamma_{n+1}+\gamma_n + (\beta_{n+1}-\alpha q)
(\beta_{n}-\alpha)\big)\big)\\ && -
\frac{{\mathscr C}_n}{\langle {\bf u}, p^2_{n-1}\rangle}\big(p_{n-1}(\alpha)
-\left({\mathscr D}_q p_{n}\right)(\alpha)+(\beta_{n}-\alpha) 
\left({\mathscr D}_q p_{n-1}\right)(\alpha)\big),\\
a_{n,n-1}&=&  \gamma_n \big(\beta_{n}+\beta_{n-1}-\alpha(1+q)\big)
+\frac{{\mathscr C}_n}{\langle {\bf u}, p^2_{n-1}\rangle}
\big(p_n(\alpha)-\gamma_n \left({\mathscr D}_q 
p_{n-1}\right)(\alpha)\\ & & +(\beta_{n-1}-\alpha) \left({\mathscr 
D}_q p_n\right)(\alpha)\big),\\
a_{n,n-2}&=& \gamma_n  \gamma_{n-1} +
\frac{{\mathscr C}_n}{\langle {\bf u}, p^2_{n-1}\rangle}
\left({\mathscr D}_q p_n\right)(\alpha).
\end{eqnarray*}
\end{proposition}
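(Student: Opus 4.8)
The plan is to exploit the single self-adjointness property that the Sobolev product does enjoy. Write $\pi(x):=(x-\alpha)(x-\alpha q)$. The decisive observation is that $\pi(\alpha)=\pi(q\alpha)=0$, so that for \emph{every} polynomial $h$ one has $(\mathscr D_q[\pi h])(\alpha)=0$ (immediate from the definition of $\mathscr D_q$, with the case $\alpha=0$ checked separately). Hence the mass term in \eqref{3:4} disappears as soon as one factor carries $\pi$, and
\[
\langle \pi f,g\rangle_S=\langle {\bf u},\pi f g\rangle=\langle f,\pi g\rangle_S,\qquad f,g\in\mathbb P,
\]
which is exactly the symmetry recorded just before the statement.

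I would first fix the range of the sum. The polynomial $\pi\,s_n^{(\alpha)}(N;x)$ has degree $n+2$, so it expands in the basis $(p_\nu)_{\nu=0}^{n+2}$ with Fourier coefficients $a_{n,\nu}=\langle {\bf u},\pi\,s_n^{(\alpha)}(N;x)\,p_\nu\rangle/\langle {\bf u},p_\nu^2\rangle$. By the equality $\langle {\bf u},\pi f g\rangle=\langle f,\pi g\rangle_S$ (with $f=s_n^{(\alpha)}(N;x)$, $g=p_\nu$) this numerator equals $\langle s_n^{(\alpha)}(N;x),\pi p_\nu\rangle_S$, and for $\nu\le n-3$ the polynomial $\pi p_\nu$ has degree $\nu+2<n$, so the bracket vanishes by the $\langle\cdot,\cdot\rangle_S$-orthogonality of $s_n^{(\alpha)}(N;x)$ to every polynomial of lower degree. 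Thus $a_{n,\nu}=0$ for $\nu<n-2$, while comparing leading coefficients (both sides monic of degree $n+2$) gives $a_{n,n+2}=1$. This yields \eqref{3:11} with the stated truncation.

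For the four remaining coefficients I would argue from the closed form \eqref{3:8} rather than from brackets, since that is what produces the $\|p_{n-1}\|^2$ normalizations appearing in the statement. Multiplying \eqref{3:8} by $\pi$ clears both denominators there, because $\pi/[(x-\alpha)(x-q\alpha)]=1$ and $\pi/(x-q\alpha)=x-\alpha$, leaving a genuine polynomial identity. I then expand each constituent in the basis $(p_\nu)$ by means of the three-term recurrence $x p_n=p_{n+1}+\beta_n p_n+\gamma_n p_{n-1}$ satisfied by $(p_n)$: applied twice to $\pi p_n$, and once to $(x-\alpha)p_n$ and to $(x-\alpha)p_{n-1}$. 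Reading off the coefficient of $p_\nu$ for $\nu=n+1,n,n-1,n-2$ then gives the four formulas.

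The principal obstacle is organizational rather than conceptual. The middle coefficient $a_{n,n}$ is the delicate one: it collects a quadratic expression in $\beta_n,\gamma_n,\gamma_{n+1}$ and $\alpha$ coming from the double application of the recurrence to $\pi p_n$, together with three distinct contributions from the kernel term, so it is the place where sign and index slips are most likely. The second point requiring care is keeping the normalizing constants coherent across all four coefficients; this is handled uniformly through the relation $\gamma_k=\langle {\bf u},p_k^2\rangle/\langle {\bf u},p_{k-1}^2\rangle$, after which the rest is routine collection of like terms.
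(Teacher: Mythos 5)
Your proposal is correct and is essentially the paper's own argument: the paper likewise obtains the truncation and the coefficients from the symmetry $\langle \pi f,g\rangle_S=\langle f,\pi g\rangle_S$, the identity $a_{n,\nu}\langle {\bf u},p_\nu^2\rangle=\langle s_n^{(\alpha)}(N;x),\pi p_\nu\rangle_S$, the connection formula, and the three-term-recurrence expansion of $(x-\alpha)(x-\alpha q)p_n(x)$, so your variant of multiplying \eqref{3:8} (which is just \eqref{3:6} with the kernel written out via \eqref{3:7}) by $\pi$ and reading off coefficients is the same computation, with the bonus that your check $(\mathscr D_q[\pi h])(\alpha)=0$ actually proves the symmetry the paper only asserts. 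One caveat: carrying your expansion through yields $\gamma_{n+1}+\gamma_n+(\beta_{n}-\alpha q)(\beta_{n}-\alpha)$ as the non-kernel part of $a_{n,n}$ and $\gamma_n\gamma_{n-1}+\gamma_{n-1}\,\mathscr C_n\left({\mathscr D}_q p_n\right)(\alpha)/\langle {\bf u}, p^2_{n-1}\rangle$ for $a_{n,n-2}$, so the printed $\beta_{n+1}$ in $a_{n,n}$ and the printed normalization in $a_{n,n-2}$ (whose denominator should be $\langle {\bf u}, p^2_{n-2}\rangle$, equivalently the missing factor $\gamma_{n-1}$) are typos in the statement, not defects in your argument.
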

Indeed, one can obtain all the coefficients of \eqref{3:11} by using 
the three-term recurrence relation of $(p_n)_n$ 
\[
xp_n(x)=p_{n+1}(x)+\beta_n p_n(x)+\gamma_n p_{n-1}(x),
\] 
with initial conditions $p_{-1}(x)=0$, $p_{0}(x)=1$,
the expression \eqref{3:6}, the identity 
\[
a_{n,\nu}\langle u, p_{\nu}^2\rangle=
\langle s_n^{(\alpha)}(N;x),(x-\alpha)(x-\alpha q)p_\nu\rangle_S,
\quad \nu=0, 1, \dots, n+2,
\]
and using the expansion of $(x-\alpha)(x-\alpha q) p_n(x)$, i.e.
\[\begin{split}
(x-\alpha)(x-\alpha q)& p_n(x)=  p_{n+2}(x)+\big(\beta_{n+1}
+\beta_n-\alpha(1+q)\big)p_{n+1}(x)+\big(\gamma_{n+1}+\gamma_n \\
+ & (\beta_{n+1}-\alpha q)(\beta_{n}-\alpha)\big)
p_{n}(x)+\gamma_n\big(\beta_{n}+\beta_{n-1}-\alpha(1+q)
\big)p_{n-1}(x)\\
+ & \gamma_n\gamma_{n-1}p_{n-2}.
\end{split}
\]

We now derive a recurrence relation for the polynomials 
$s_n^{(\alpha)}(N;x)$.
\begin{proposition}  (Five-term 
recurrence relation)   \\ 
 The following recurrence Relation holds for $n\ge 2$:
\begin{equation} \label{3:12}
(x-\alpha)(x-\alpha q)s_n^{(\alpha)}(N;x)=\sum_{\nu=n-2}^{n+2} 
\lambda_{n,\nu} s_\nu^{(\alpha)}(N;x),
\end{equation}
where $\lambda_{n,n+2}=1$, and
\[
\lambda_{n,j}=\dfrac{a_{j,n}\langle {\bf u}, p^2_{n}\rangle\left(1
+N{\mathscr K}^{(1,1)}_{q,n}
(\alpha,\alpha)\right)+N ({\mathscr D}_q p_n)(\alpha)\displaystyle
\sum_{i=n+1}^{j+2}a_{j,i}({\mathscr D}_q p_i)(\alpha)}{(1+N 
{\mathscr K}^{(1,1)}_{q,n}(\alpha,\alpha))\|S_j\|_S^2},
\]
where 
\[
\|S_n\|_S^2:=(S_n,S_n)_S=\langle s_n^{(\alpha)}(N;x), s_n^{(\alpha)}(N;x)
\rangle_S=\frac{1+N {\mathscr K}^{(1,1)}_{q,n+1}(\alpha,\alpha)}
{1+N {\mathscr K}^{(1,1)}_{q,n}(\alpha,\alpha)}\, \langle {\bf u}, p^2_{n}\rangle.
\]
\end{proposition}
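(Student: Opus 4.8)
The plan is to build everything on the single structural observation recorded just above the statement, namely that multiplication by $\varpi(x):=(x-\alpha)(x-\alpha q)$ is symmetric for $\langle\cdot,\cdot\rangle_S$. This holds because $\varpi(\alpha)=\varpi(q\alpha)=0$, so the $q$-Leibniz rule gives $({\mathscr D}_q(\varpi f))(\alpha)=({\mathscr D}_q\varpi)(\alpha)\,f(q\alpha)+\varpi(\alpha)\,({\mathscr D}_q f)(\alpha)=0$; the Sobolev correction term therefore vanishes and $\langle \varpi f,g\rangle_S=\langle{\bf u},\varpi fg\rangle=\langle f,\varpi g\rangle_S$. Granting this, $\varpi\, s_n^{(\alpha)}(N;\cdot)$ has degree $n+2$, so expanding it in the orthogonal basis $(s_\nu^{(\alpha)})_\nu$ yields $\varpi\, s_n^{(\alpha)}=\sum_{\nu=0}^{n+2}\lambda_{n,\nu}s_\nu^{(\alpha)}$ with $\lambda_{n,\nu}=\langle \varpi s_n^{(\alpha)},s_\nu^{(\alpha)}\rangle_S/\|S_\nu\|_S^2$. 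Using symmetry this equals $\langle s_n^{(\alpha)},\varpi s_\nu^{(\alpha)}\rangle_S/\|S_\nu\|_S^2$, and since $\deg(\varpi s_\nu^{(\alpha)})=\nu+2<n$ for $\nu<n-2$, orthogonality of $s_n^{(\alpha)}$ forces $\lambda_{n,\nu}=0$ there. This is exactly the five-term truncation \eqref{3:12}, and comparing monic leading coefficients gives $\lambda_{n,n+2}=1$.

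Next I would reduce each remaining coefficient to inner products $\langle s_n^{(\alpha)},p_i\rangle_S$. Substituting the expansion \eqref{3:11}, i.e. $\varpi\, s_j^{(\alpha)}=\sum_{i=j-2}^{j+2}a_{j,i}p_i$, into $\langle s_n^{(\alpha)},\varpi s_j^{(\alpha)}\rangle_S$ gives $\lambda_{n,j}=\|S_j\|_S^{-2}\sum_{i=j-2}^{j+2}a_{j,i}\,\langle s_n^{(\alpha)},p_i\rangle_S$. To evaluate $\langle s_n^{(\alpha)},p_i\rangle_S$ I would use the connection formula \eqref{3:6} together with $({\mathscr D}_q s_n^{(\alpha)})(N;\alpha)=({\mathscr D}_q p_n)(\alpha)/(1+N{\mathscr K}_{q,n}^{(1,1)}(\alpha,\alpha))$. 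Since ${\mathscr K}_{q,n}^{(0,1)}(\cdot,\alpha)$ has degree $n-1$ and the generalized reproducing identity gives $\langle{\bf u},{\mathscr K}_{q,n}^{(0,1)}(x,\alpha)p_i(x)\rangle=({\mathscr D}_q p_i)(\alpha)$ for $i\le n-1$, one finds $\langle s_n^{(\alpha)},p_i\rangle_S=0$ for $i<n$ (as it must, by orthogonality), while for $i\ge n$ only the point-evaluation part $N({\mathscr D}_q s_n^{(\alpha)})(N;\alpha)({\mathscr D}_q p_i)(\alpha)$ survives, supplemented by $\langle{\bf u},p_n^2\rangle$ at $i=n$. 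Clearing the common factor $1+N{\mathscr K}_{q,n}^{(1,1)}(\alpha,\alpha)$ then collects the $i=n$ term into $a_{j,n}\langle{\bf u},p_n^2\rangle$ and the $i>n$ terms into $N({\mathscr D}_q p_n)(\alpha)\sum_{i}a_{j,i}({\mathscr D}_q p_i)(\alpha)$, reproducing the stated numerator.

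Finally I would confirm the closed form for $\|S_n\|_S^2$. Because $s_n^{(\alpha)}$ is monic and $\langle\cdot,\cdot\rangle_S$-orthogonal to $p_0,\dots,p_{n-1}$, one has $\|S_n\|_S^2=\langle s_n^{(\alpha)},p_n\rangle_S=\langle{\bf u},p_n^2\rangle+N\big(({\mathscr D}_q p_n)(\alpha)\big)^2/(1+N{\mathscr K}_{q,n}^{(1,1)}(\alpha,\alpha))$, and the telescoping identity ${\mathscr K}_{q,n+1}^{(1,1)}(\alpha,\alpha)={\mathscr K}_{q,n}^{(1,1)}(\alpha,\alpha)+\big(({\mathscr D}_q p_n)(\alpha)\big)^2/\langle{\bf u},p_n^2\rangle$ converts this into the advertised ratio. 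The conceptual content is light once symmetry is established; I expect the only genuine labour to lie in the index bookkeeping of the second paragraph—tracking which of the five indices $i\in\{j-2,\dots,j+2\}$ exceed $n$ and, in particular, handling the diagonal contribution at $i=n$ carefully when matching the point-evaluation terms against the summation range written in the statement.
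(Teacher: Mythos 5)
Your strategy is sound, and it is in fact the argument the paper leaves implicit: the text offers no proof here, deferring to Proposition 3.1 of \cite{bav2}, and that proof is the same symmetry-plus-Fourier-coefficient computation you give. Your verification that multiplication by $\varpi(x)=(x-\alpha)(x-\alpha q)$ is $\langle\cdot,\cdot\rangle_S$-symmetric (since $\varpi(\alpha)=\varpi(q\alpha)=0$ kills the $q$-Leibniz correction, so $({\mathscr D}_q(\varpi f))(\alpha)=0$), the resulting five-term truncation of the expansion of $\varpi\,s_n^{(\alpha)}$ in the Sobolev basis, the value $\lambda_{n,n+2}=1$ by monicity, the evaluation of $\langle s_n^{(\alpha)},p_i\rangle_S$ (zero for $i<n$, pure point-mass term for $i>n$, the full Sobolev norm at $i=n$), and the telescoping identity ${\mathscr K}_{q,n+1}^{(1,1)}(\alpha,\alpha)={\mathscr K}_{q,n}^{(1,1)}(\alpha,\alpha)+\big(({\mathscr D}_q p_n)(\alpha)\big)^2/\langle {\bf u},p_n^2\rangle$ yielding the closed form of $\|S_n\|_S^2$ are all correct.

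The one place you should not take the statement at face value is exactly the diagonal bookkeeping you postponed to the end. Carried out, it does \emph{not} reproduce the numerator as printed. The $i=n$ contribution is $a_{j,n}\langle s_n^{(\alpha)},p_n\rangle_S=a_{j,n}\|S_n\|_S^2=a_{j,n}\big(\langle{\bf u},p_n^2\rangle+N(({\mathscr D}_q p_n)(\alpha))^2/(1+N{\mathscr K}_{q,n}^{(1,1)}(\alpha,\alpha))\big)$, so after clearing the common factor the numerator of $\lambda_{n,j}$ is
\begin{equation*}
a_{j,n}\langle {\bf u},p_n^2\rangle\left(1+N{\mathscr K}_{q,n}^{(1,1)}(\alpha,\alpha)\right)
+N({\mathscr D}_q p_n)(\alpha)\sum_{i=n}^{j+2}a_{j,i}({\mathscr D}_q p_i)(\alpha),
\end{equation*}
or, equivalently via the telescoping identity, $a_{j,n}\langle {\bf u},p_n^2\rangle\left(1+N{\mathscr K}_{q,n+1}^{(1,1)}(\alpha,\alpha)\right)$ together with the sum starting at $i=n+1$. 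The hybrid printed in the proposition --- subscript $n$ in the Kernel factor combined with the sum starting at $i=n+1$ --- drops the term $a_{j,n}N\big(({\mathscr D}_q p_n)(\alpha)\big)^2$ and is presumably an off-by-one typo in the paper. So your claim that clearing denominators ``reproduces the stated numerator'' is slightly inaccurate: your (correct) computation produces a corrected version of the formula, and the write-up should state that explicitly rather than assert agreement. With that emendation the proof is complete.
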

The proof of this result is analogous to the one of  
the Proposition 3.1 in \cite{bav2}.
\begin{remark}
Notice that after basic manipulations of \eqref{3:12}we get
\[
\lambda_{n,n-2}=\frac{\|S_n\|_S^2}{\|S_{n-2}\|_S^2},
\qquad 
\lambda_{n,n+1}\|S_{n+1}\|_S^2=\lambda_{n+1,n}\|S_{n}\|_S^2.
\]
\end{remark}
 Observe that this result is direct after some straightforward calculations.
\subsection{The second order linear $q$-difference holonomic equation}
In the following we assume that $\bf u$ is a classical $q$-discrete functional.
Let $\Phi(x)$ denote the polynomial $(x-\alpha)(x-\alpha q)$. 
From the expression \eqref{3:11} we get 
\begin{equation} \label{3:13}
\Phi(x) s_n^{(\alpha)}(N;x)=A(x;n)p_n(x)+B(x;n)p_{n-1}(x),
\end{equation}
where $A(x; n)$ and $B(x; n)$ are polynomials of degree bounded by a 
number independent of $n$ and at most 2 and 1, respectively. 
On the other hand, since $\bf u$ is a classical linear functional, then 
there exist a polynomial $\Psi(x)$ and two polynomials $M(x;n)$ and 
$N(x;n)$, with degree bounded by a number independent of $n$, such 
that
\begin{equation} \label{3:14}
\Psi(x) ({\mathscr D}_q p_n)(x)=M(x;n)p_n(x)+N(x;n)p_{n-1}(x).
\end{equation}
Using  \eqref{3:13} and \eqref{3:14} we obtain the following representation 
formula:
\begin{equation} \label{3:15}
\pi(x) s_n^{(\alpha)}(N;x)=a(x;n)p_n(x)+b(x;n)p_{n}(q x),
\end{equation}
where $a$, $b$ and $\pi$ are polynomials of degree bounded by a 
number independent of $n$. With all these expressions we can 
formulate the result.
\begin{theorem} 
Let $\bf u$ be a classical linear functional. Suppose that the polynomials 
$\left(s_n^{(\alpha)}(N;x)\right)$ are defined by \eqref{3:15} where the 
polynomial  $p_n$ is a solution of a second order linear $q$-difference holonomic equation, $q$-SODE in short, of the form 
\begin{equation} \label{3:16}
\sigma(x;n)p_n(q^{-1}x)-\varphi(x;n)p_n(x)+\zeta(x;n) p_n(qx)=0.
\end{equation}
Then $\left(s_n^{(\alpha)}(N;x)\right)$ satisfy a $q$-SODE of the form
\begin{equation} \label{3:17}
\tilde \sigma(x;n)s_n^{(\alpha)}(N;q^{-1}x)-\tilde \varphi(x;n)
s_n^{(\alpha)}(N;x)+\tilde \zeta(x;n) s_n^{(\alpha)}(N;qx)=0,
\end{equation}
where $\tilde \sigma$, $\tilde \varphi$ and $\tilde \zeta$ 
can be computed explicitly.
\end{theorem}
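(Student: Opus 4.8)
The plan is to eliminate the classical polynomial $p_n$ from the representation formula \eqref{3:15} by combining it with the $q$-SODE \eqref{3:16}. The key structural input is that \eqref{3:15} already expresses $\pi(x)s_n^{(\alpha)}(N;x)$ as a polynomial combination of only the two values $p_n(x)$ and $p_n(qx)$. First I would apply the dilations $x\mapsto q^{-1}x$ and $x\mapsto qx$ to \eqref{3:15}, obtaining two further relations, so that the three quantities $s_n^{(\alpha)}(N;q^{-1}x)$, $s_n^{(\alpha)}(N;x)$, $s_n^{(\alpha)}(N;qx)$ are expressed through the four values $p_n(q^{-1}x)$, $p_n(x)$, $p_n(qx)$, $p_n(q^2x)$.

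Next I would reduce those four values to two by using the holonomic equation. Equation \eqref{3:16} gives directly
\[
p_n(q^{-1}x)=\frac{\varphi(x;n)p_n(x)-\zeta(x;n)p_n(qx)}{\sigma(x;n)},
\]
while \eqref{3:16} with $x$ replaced by $qx$ gives
\[
p_n(q^2x)=\frac{\varphi(qx;n)p_n(qx)-\sigma(qx;n)p_n(x)}{\zeta(qx;n)}.
\]
Substituting these into the three shifted copies of \eqref{3:15}, each right-hand side becomes a combination $A_i(x;n)p_n(x)+B_i(x;n)p_n(qx)$ for $i=1,2,3$, whose coefficients are explicit rational functions of $x$ assembled from $a,b,\pi,\sigma,\varphi,\zeta$.

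I would then read the three relations as an overdetermined linear system: three equations in the two unknowns $p_n(x)$ and $p_n(qx)$. Since $p_n$ has only finitely many zeros, $(p_n(x),p_n(qx))\ne(0,0)$ for all but finitely many $x$, so the augmented $3\times 3$ determinant must vanish identically:
\[
\det\begin{pmatrix}
A_1 & B_1 & \pi(q^{-1}x)\,s_n^{(\alpha)}(N;q^{-1}x)\\
A_2 & B_2 & \pi(x)\,s_n^{(\alpha)}(N;x)\\
A_3 & B_3 & \pi(qx)\,s_n^{(\alpha)}(N;qx)
\end{pmatrix}=0.
\]
Expanding along the last column produces exactly a relation of the shape \eqref{3:17} with the $(+,-,+)$ sign pattern, the coefficients being the $2\times 2$ cofactors
\[
\tilde\sigma=\pi(q^{-1}x)(A_2B_3-A_3B_2),\quad
\tilde\varphi=\pi(x)(A_1B_3-A_3B_1),\quad
\tilde\zeta=\pi(qx)(A_1B_2-A_2B_1),
\]
and multiplying through by $\sigma(x;n)\zeta(qx;n)$ clears the denominators introduced above, turning $\tilde\sigma,\tilde\varphi,\tilde\zeta$ into genuine polynomials computed explicitly from the data of \eqref{3:15} and \eqref{3:16}.

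The step I expect to be the main obstacle is the final verification that this yields a bona fide second-order holonomic equation rather than a degenerate identity: after clearing denominators one must check that $\tilde\sigma$ and $\tilde\zeta$ do not vanish identically and that the degrees of $\tilde\sigma,\tilde\varphi,\tilde\zeta$ are bounded independently of $n$. Both conclusions follow from the hypotheses, since $a,b,\pi$ in \eqref{3:15} and $\sigma,\varphi,\zeta$ in \eqref{3:16} all carry $n$-independent degree bounds; the genuine effort lies in the bookkeeping needed to exhibit the three coefficients in closed form.
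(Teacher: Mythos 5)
Your proposal is correct and follows essentially the same route as the proof the paper invokes: the paper omits the argument and cites \cite[\S 3.2]{alpe}, where the $q$-SODE is obtained exactly by shifting the representation formula \eqref{3:15}, using \eqref{3:16} (at $x$ and at $qx$) to reduce everything to the pair $p_n(x)$, $p_n(qx)$, and eliminating that pair through the vanishing of the resulting $3\times 3$ determinant, whose cofactors give $\tilde\sigma$, $\tilde\varphi$, $\tilde\zeta$ after clearing the denominators $\sigma(x;n)\zeta(qx;n)$. Your closing remark correctly identifies the only delicate point (nondegeneracy of $\tilde\sigma$ and $\tilde\zeta$ together with the $n$-independent degree bounds), which is likewise handled in the cited reference by explicit computation of the coefficients.
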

A completely analogous proof is given in \cite[\S 3.2]{alpe} 
so it will be omitted.
\begin{remark}
It is clear that after some manipulations one can obtain some lowering and 
raising operators, namely ${\mathfrak a}^\dag$ and $\mathfrak a$. 
In fact such operators can be written as follows:
\[
\mathfrak a^\dag={\mathscr A}(x;n) {\mathscr D}_q+{\mathscr E}(x;n) I_d,
\quad 
\mathfrak a={\mathscr B}(x;n) {\mathscr D}_{q^{-1}}+{\mathscr F}(x;n) I_d,
\]
where $I_d$ represents the identity operator.
\end{remark}
\section{The examples}
\subsection{The Al-Salam-Carlitz I polynomials}
The Al-Salam-Carlitz I polynomials, ASCI in short, $U_n^{(a)}(x;q)$ were 
introduced in \cite{ASC1965} by Al-Salam and Carlitz (1965) and they are defined via 
basic hypergeometric series as \cite{koekoeketal} 
\begin{equation}
U_n^{(a)}(x;q):=\left(-a\right) ^{n}q^{\binom{n%
}{2}}\,_{2}\phi _{1}\left(q^{-n},x^{-1}; 0; q,a^{-1}qx\right).  \label{4:18}
\end{equation}
\begin{proposition}
For this polynomial sequence the following identity holds:
\begin{enumerate}
\item Orthogonality relation.  For $a<0$ 
\[ 
\int_{a}^{1}U_{m}^{(a)}(x;q) U_n^{\left( a\right)
}(x;q) \left( qx,a^{-1}qx;q\right) _{\infty }d_{q}x=\big\|
U_n^{(a)}\big\| ^{2}\delta _{m,n}.  
\] 
where by $\delta _{i,j}$ we denote the Kronecker delta function.
\item Squared norm. 
\[ 
\big\|U_n^{(a)}\big\| ^{2}=\left(1-q\right)\left(-a\right)
^{n} q^{\binom{n}{2}} \left( q;q\right) _{n}\left( q,a,a^{-1}q;q\right)
_{\infty }.
\] 

\item The Three-Term Recurrence Relation. For $n\ge 0$, 
\begin{equation} \label{4:19}
U_{n+1}^{(a)}(x;q)\!=\!\left(x\!-\!(a\!+\!1)q^{n}\right)\! U_n^{\left( a\right)}(x;q)
+aq^{n-1}\!\left(1\!-\!q^{n}\right)\!U_{n-1}^{(a)}(x;q),
\end{equation}
with initial conditions $U_{0}^{(a)}( x;q)=1$, and $U_{-1}^{(a)}(x;q)=0$.

\item Forward Shift Operator.
\begin{equation}  \label{4:20}
\left( {\mathscr D}_{q}U_n^{(a)}\right) (x;q)
=\left[ n\right]_{q}U_{n-1}^{(a)}(x;q).
\end{equation}%
\end{enumerate}
\end{proposition}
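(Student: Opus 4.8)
The four assertions are the classical structural identities for the Al-Salam--Carlitz I family, and the plan is to derive them from the basic hypergeometric representation \eqref{4:18} together with the $q$-classical machinery recalled in Section~2. I would organise the argument in the order: first establish the orthogonality (item~1), which identifies $\big(U_n^{(a)}\big)_n$ as a MOPS; then obtain the forward shift (item~4) by a direct computation; deduce the three-term recurrence together with its coefficients (item~3); and finally read off the squared norm (item~2) from those coefficients, so that only one genuine $q$-integral has to be evaluated.

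For the orthogonality I would start from the weight $\rho(x)=\left(qx,a^{-1}qx;q\right)_\infty$ and verify that the functional $\mathbf u$ it represents satisfies a Pearson-type equation $\mathscr{D}_q[\phi\,\mathbf u]=\psi\,\mathbf u$ of the kind recalled in Section~2, with a polynomial $\phi$ of degree two that vanishes at the two endpoints $x=a$ and $x=1$ and a polynomial $\psi$ of degree one; this exhibits $\mathbf u$ as a $q$-classical functional. The crucial structural point is precisely that $\phi$, and hence $\phi\,\rho$, vanishes at both endpoints of the Jackson integral. Orthogonality $\int_a^1 U_n^{(a)}\,\pi\,\rho\,d_qx=0$ for every $\pi$ with $\deg\pi<n$ then follows by writing $U_n^{(a)}$ through the associated $q$-Rodrigues representation (standard for $q$-classical families, see \cite{koekoeketal}) and integrating by parts $n$ times in the Jackson sense: each step transfers a $\mathscr{D}_q$ from the Rodrigues factor onto $\pi$, the boundary contributions dropping out because of the vanishing of $\phi\,\rho$, and after $n$ steps one is differentiating a polynomial of degree $<n$ down to zero.

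The forward shift (item~4) I would obtain by applying $\mathscr{D}_q$ to \eqref{4:18}. Since the variable $x$ enters \eqref{4:18} both through the numerator parameter $x^{-1}$ and through the argument $a^{-1}qx$, a $q$-derivative mixes a parameter-shift and an argument-shift; expanding the terminating $_2\phi_1$ as a sum over $k=0,\dots,n$, applying $\mathscr{D}_q$ termwise, re-indexing $k\mapsto k-1$, and combining the relevant contiguity relations of the $_2\phi_1$ collapses the differentiated series into $[n]_q$ times the representation of $U_{n-1}^{(a)}$, which is \eqref{4:20}. With orthogonality established, the three-term recurrence \eqref{4:19} is then automatic from the general MOPS theory; its coefficients $\beta_n=(a+1)q^n$ and $\gamma_n=-aq^{n-1}(1-q^n)$ I would pin down either from contiguous relations of the $_2\phi_1$ or, for $\beta_n$, by comparing the subleading coefficients of $U_n^{(a)}$ and $U_{n+1}^{(a)}$ (both are monic, as one checks from the leading coefficient of \eqref{4:18}).

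Finally, the squared norm (item~2) follows without any further integration beyond a single base case: for a monic orthogonal sequence one has $\gamma_n=\big\|U_n^{(a)}\big\|^2/\big\|U_{n-1}^{(a)}\big\|^2$, so iterating with $\gamma_k=-aq^{k-1}(1-q^k)$ yields $\big\|U_n^{(a)}\big\|^2=(-a)^nq^{\binom n2}(q;q)_n\,\big\|U_0^{(a)}\big\|^2$, and it remains only to evaluate $\big\|U_0^{(a)}\big\|^2=\int_a^1\rho\,d_qx=(1-q)\,(q,a,a^{-1}q;q)_\infty$. I expect the two genuinely delicate points to be the bookkeeping in the forward-shift computation (keeping the $q$-powers and the Pochhammer shifts consistent through the re-indexing) and the explicit evaluation of this last Jackson integral, which is itself a nontrivial $q$-beta-type evaluation; by contrast the orthogonality and the recurrence are essentially forced once the Pearson structure and the MOPS property are in place.
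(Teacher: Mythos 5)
Your proposal is correct, but note that the paper does not actually prove this proposition: all four identities are quoted as standard facts about the Al-Salam--Carlitz I family from \cite{koekoeketal} (going back to \cite{ASC1965}), so the comparison here is between your self-contained derivation and a citation. Your route is the classical one and every step goes through: the weight $(qx,a^{-1}qx;q)_\infty$ satisfies a Pearson-type equation with $\phi(x)=(x-a)(x-1)$ vanishing at both endpoints of the Jackson integral, which is exactly what kills the boundary terms in the $q$-integration by parts of the Rodrigues representation; the recurrence coefficients are $\beta_n=(a+1)q^n$ and $\gamma_n=-aq^{n-1}(1-q^n)$, positive for $a<0$; telescoping $\gamma_k$ gives $\big\|U_n^{(a)}\big\|^2=(-a)^nq^{\binom n2}(q;q)_n\big\|U_0^{(a)}\big\|^2$, and the remaining base case is the known Andrews--Askey-type evaluation $\int_a^1(qx,a^{-1}qx;q)_\infty\,d_qx=(1-q)(q,a,a^{-1}q;q)_\infty$. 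Two refinements are worth recording. First, the forward shift is simpler than your description suggests: since $(x^{-1};q)_k(a^{-1}qx)^k=(q/a)^k\prod_{j=0}^{k-1}(x-q^j)$, the representation \eqref{4:18} is a sum of $q$-shifted factorial polynomials on which $\mathscr D_q$ acts by the one-line rule $\mathscr D_q\prod_{j=0}^{k-1}(x-q^j)=[k]_q\prod_{j=0}^{k-2}(x-q^j)$, so a single re-indexing $k\mapsto k+1$ (using $(q^{-n};q)_{k+1}=(1-q^{-n})(q^{1-n};q)_k$ and $[k+1]_q/(q;q)_{k+1}=\bigl((1-q)(q;q)_k\bigr)^{-1}$) yields $[n]_qU_{n-1}^{(a)}(x;q)$ with no contiguity relations and no mixing of parameter and argument shifts. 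Second, beware the latent circularity between items 2 and 3: the identity $\gamma_n=\big\|U_n^{(a)}\big\|^2/\big\|U_{n-1}^{(a)}\big\|^2$ can only be used to compute the norms if $\gamma_n$ has been determined independently of them, so of your two options for pinning down the coefficients you must take the first (verify \eqref{4:19} as a polynomial identity on \eqref{4:18}, or extract $\gamma_n$ from explicit coefficients) rather than appeal to the norm ratio; as ordered, your plan does respect this. What your derivation buys over the paper's citation is a genuinely self-contained verification, including the observation that $a<0$ is precisely the condition making $\gamma_n>0$ and the Jackson weight positive, hence the functional quasi-definite.
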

\subsubsection{Connection formula and hypergeometric representation}
We can express the ASCI\---Sobolev-type
polynomials $\left(U_n^{(a,\alpha)}(N;x;q)\right) $
in terms of the ASCI the associated Kernel polynomials. 
In fact, by construction, we have
\begin{equation} \label{DASMSM}
U_n^{(a,\alpha)}(N;x;q) =U_n^{(a)}(x;q) -\frac{N[n]_qU_{n-1}^{(a)}(\alpha ;q)}
{1+N{\mathscr K}_{q,n}^{(1,1)}\left( \alpha
,\alpha \right)}{\mathscr K}_{q,n}^{(0,1)}(x,\alpha).  
\end{equation} 
And by using \eqref{3:6} and \eqref{4:20} we obtain
\[
U_n^{(a,\alpha)}(N;x;q) =A_n(x) U_n^{(a)}(x;q)
+B_n(x)U_{n-1}^{(a)}(x;q) , 
\]
where%
\[
A_n(x) =1-\frac{N[n]_q U_{n-1}^{(a)}(\alpha;q)
\left(U_{n-1}^{(a)}\left(\alpha ;q\right)+
(x-\alpha) [n-1]_qU_{n-2}^{(a)}
\left(\alpha ;q\right)\right)}{\big\| U_{n-1}^{(a)}\big\|
^{2}(x-\alpha)(x-\alpha q)
\left(1+N{\mathscr K}_{q,n}^{(1,1)}(\alpha,\alpha)\right)} ,  
\]
and%
\[
B_n(x)=\frac{N[n]_q U_{n-1}^{(a)}( \alpha ;q)
\left(U_n^{\left(a\right) }\left( \alpha ;q\right) 
+\left( x-\alpha \right) [n]_q U_{n-1}^{(a)}( \alpha ;q)\right)}{
\big\| U_{n-1}^{(a)}\big\|^{2}(x-\alpha)(x-\alpha q)\left(
1+N{\mathscr K}_{q,n}^{(1,1)}( \alpha ,\alpha)\right)}.
\]
Thus, taking into account \eqref{4:18} as well as the
identity%
\begin{equation} \label{4:21}
\left(q^{1-n};q\right) _{k}=\frac{q}{\left[ n\right] _{q}}
\left(\left[ n-1\right] _{q}-\left[ k-1\right] _{q}\right) 
\left( q^{-n};q\right) _{k},
\end{equation}%
we deduce%
\begin{multline*}
U_n^{(a,\alpha)}(N;x;q) =-\left(
-a\right) ^{n-1}q^{\binom{n}{2}-n+2}\frac{B_n(x)}{\left[ n\right] _{q}} \\
\times \sum_{k=0}^n\left( \left[ k-1\right] _{q}+\Theta _n(x) \right) 
\frac{\left( q^{-n};q\right)_{k}\left( x^{-1};q\right) _{k}}{\left(q;q\right) _{k}}\left(
a^{-1}qx\right) ^{k},
\end{multline*}%
where%
\[
\Theta _n(x) =\frac{aq^{n-2}%
\left[ n\right] _{q}A_n(x) }{%
B_n(x) }-\left[n-1\right] _{q}.
\]
In addition, we have%
\begin{equation} \label{4:21a}
\left[k-1\right] _{q}+\Theta _n(x) =\frac{1-\varphi _n(x) q^{-1}}
{\varphi _n(x) \left( 1-q\right)}
\frac{\left( \varphi _n(x);q\right) _{k}}{\left(\varphi _n(x) q^{-1};q\right) _{k}},
\end{equation}
where
\[
\varphi _n(x) =\frac 1{(1-q)\Theta_n(x)  +1}.
\]
Consequently
\begin{multline*}
U_n^{(a,\alpha)}(N;x;q) =-(-a) ^{n-1}q^{\binom{n}{2}-n+2}\frac{B_n(x) }{[n] _{q}}
\frac{1-\varphi_n(x) q^{-1}}{\varphi _n(x) (1-q) } \\
\times \sum_{k=0}^n\frac{( q^{-n};q) _{k}\left(
x^{-1};q\right) _{k}\left( \varphi_n(x);q\right) _{k}}
{\left( \varphi_n(x) q^{-1};q\right) _{k}}\frac{( a^{-1}qx) ^{k}}{%
\left( q;q\right) _{k}}.
\end{multline*}%
Thus we have proven the following identity for the 
ASCI-Sobolev-type polynomial.
\begin{theorem}
The ASCI-Sobolev-type polynomial has the following 
hypergeometric representation: %
\begin{multline}
U_n^{(a,\alpha)}(N;x;q) =(-a) ^{n}\frac{B_n(x)(1-\varphi _n(x)q^{-1}) 
q^{\binom{n}{2}-n+2}}{a[n] _{q}\varphi _n(x) (1-q) } \\
\times \,_{3}\phi _{2}\left( q^{-n},x^{-1},\varphi _n(x); 
0,\varphi _n(x) q^{-1};q,a^{-1}qx\right) .  \label{4:22}
\end{multline}
\end{theorem}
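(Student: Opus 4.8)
The plan is to start from the two-term representation
$U_n^{(a,\alpha)}(N;x;q)=A_n(x)\,U_n^{(a)}(x;q)+B_n(x)\,U_{n-1}^{(a)}(x;q)$
already obtained from \eqref{3:6} and the forward shift \eqref{4:20}, and to collapse the two basic hypergeometric sums it contains into a single ${}_3\phi_2$. First I would insert the ${}_2\phi_1$ representation \eqref{4:18} for both $U_n^{(a)}$ and $U_{n-1}^{(a)}$, writing each as a sum over $k$ whose summand is built from $(x^{-1};q)_k$, $(q;q)_k$ and a power of $a^{-1}qx$. The immediate obstruction is that the two series carry different $q$-shifted factorials in the numerator: $U_n^{(a)}$ brings in $(q^{-n};q)_k$ while $U_{n-1}^{(a)}$ brings in $(q^{1-n};q)_k$, so they cannot be added term by term as they stand.

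To remove this mismatch I would apply the contiguity identity \eqref{4:21}, which writes $(q^{1-n};q)_k$ as $\frac{q}{[n]_q}\big([n-1]_q-[k-1]_q\big)(q^{-n};q)_k$. After this substitution both contributions share the common factor $(q^{-n};q)_k(x^{-1};q)_k(a^{-1}qx)^k/(q;q)_k$, and the polynomial becomes a single sum whose summand carries the $k$-dependent prefactor $[k-1]_q+\Theta_n(x)$, where $\Theta_n(x)=aq^{n-2}[n]_q A_n(x)/B_n(x)-[n-1]_q$ absorbs the ratio of the two connection coefficients. This reproduces the intermediate displayed formula; the bookkeeping of the $(-a)^{n}$, $q^{\binom n2}$ and $[n]_q$ prefactors must be done carefully, but it is routine once the index alignment is in place.

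The key step is then to convert the affine-in-$[k-1]_q$ prefactor into a ratio of $q$-shifted factorials, which is precisely identity \eqref{4:21a}. I would verify it by observing that $(\varphi_n;q)_k/(\varphi_n q^{-1};q)_k$ telescopes to $(1-\varphi_n q^{k-1})/(1-\varphi_n q^{-1})$, and that with the choice $\varphi_n(x)=1/\big((1-q)\Theta_n(x)+1\big)$ one has $(1-q)\Theta_n=(1-\varphi_n)/\varphi_n$, whence $[k-1]_q+\Theta_n=(1-\varphi_n q^{k-1})/\big(\varphi_n(1-q)\big)$ matches the right-hand side of \eqref{4:21a}. This is the heart of the argument: the extra upper parameter $\varphi_n$ and lower parameter $\varphi_n q^{-1}$ are generated exactly by this factorization, so the series is promoted from a ${}_2\phi_1$ to a ${}_3\phi_2$.

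Finally I would read the resulting series off against the definition of ${}_r\phi_s$. With $r=3$ and $s=2$ the exponent $1+s-r$ in $\big((-1)^k q^{\binom k2}\big)^{1+s-r}$ vanishes, and since $(0;q)_k=1$ the summand reduces to $(q^{-n};q)_k(x^{-1};q)_k(\varphi_n;q)_k\,(a^{-1}qx)^k/\big((\varphi_n q^{-1};q)_k(q;q)_k\big)$, which is exactly the summand of ${}_3\phi_2(q^{-n},x^{-1},\varphi_n;0,\varphi_n q^{-1};q,a^{-1}qx)$. Collecting the constant factor $-(-a)^{n-1}q^{\binom n2-n+2}B_n(x)(1-\varphi_n q^{-1})/\big([n]_q\varphi_n(1-q)\big)$ and rewriting $-(-a)^{n-1}=(-a)^n/a$ then yields \eqref{4:22}. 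I expect the main difficulty to lie not in this concluding identification but in the reindexing of the second and third steps: keeping the scalar prefactors consistent through the substitution \eqref{4:21}, and correctly establishing the telescoping identity \eqref{4:21a}, since any slip in the definition of $\varphi_n$ would propagate directly into the final hypergeometric parameters.
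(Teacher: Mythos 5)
Your proposal is correct and follows essentially the same route as the paper: the two-term representation $A_n(x)U_n^{(a)}(x;q)+B_n(x)U_{n-1}^{(a)}(x;q)$ from \eqref{3:6} and \eqref{4:20}, the contiguity identity \eqref{4:21} to merge the two series over a common $(q^{-n};q)_k$, the factorization \eqref{4:21a} with $\varphi_n(x)=1/\bigl((1-q)\Theta_n(x)+1\bigr)$ to promote the sum to a ${}_3\phi_2$, and the final rewriting $-(-a)^{n-1}=(-a)^n/a$. Your explicit telescoping verification of \eqref{4:21a}, which the paper merely asserts, is a welcome addition, and your prefactor bookkeeping (including $q^{\binom{n-1}{2}}\cdot q = q^{\binom{n}{2}-n+2}$) checks out.
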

\subsubsection{Distribution of the zeros}
Our main aim in this Section is to study the location as well as 
to obtain results concerning the monotonicity and speed of 
convergence  of the zeros of the ASCI-Sobolev-type polynomial 
$U_n^{(a,\alpha)}(N;x;q)$. 
For this purpose we use Lemma \ref{Lemm:2.1}. 

\begin{lemma}
The polynomial ${\mathscr K}_{q,n}^{\left( 0,1\right) }\left(x,\alpha \right) $ 
has $n-1$ real and  simple zeros which interlace with the zeros of 
$U_n^{(a)}(x;q) $.
\end{lemma}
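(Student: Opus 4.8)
The plan is to identify $\mathscr{K}_{q,n}^{(0,1)}(x,\alpha)$, up to a nonzero constant, with the difference $U_n^{(a)}(x;q)-r_n^{(\alpha)}(x)$ of two monic polynomials of degree $n$, and then to read off its zeros from the interlacing already recorded in Proposition~\ref{Prop:3.1}. Solving the limit formula \eqref{3:9} for the kernel gives
\[
\mathscr{K}_{q,n}^{(0,1)}(x,\alpha)=\frac{\mathscr{K}_{q,n}^{(1,1)}(\alpha,\alpha)}{(\mathscr{D}_q U_n^{(a)})(\alpha;q)}\bigl(U_n^{(a)}(x;q)-r_n^{(\alpha)}(x)\bigr),
\]
which is legitimate precisely when the two scalar factors are nonzero. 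Here $\mathscr{K}_{q,n}^{(1,1)}(\alpha,\alpha)=\sum_{k}(\mathscr{D}_qU_k^{(a)})(\alpha;q)^2/\|U_k^{(a)}\|^2>0$ since $\bf u$ is positive definite, while by the forward shift \eqref{4:20} we have $(\mathscr{D}_qU_n^{(a)})(\alpha;q)=[n]_qU_{n-1}^{(a)}(\alpha;q)$, which is nonzero because $\alpha$ lies outside $\text{supp}({\bf u})$ and is therefore never a zero of $U_{n-1}^{(a)}$.

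First I would check that the right-hand side has degree exactly $n-1$: the monic leading terms of $U_n^{(a)}$ and $r_n^{(\alpha)}$ cancel, and from the definition of the kernel the leading coefficient of $\mathscr{K}_{q,n}^{(0,1)}(\cdot,\alpha)$ is $(\mathscr{D}_qU_{n-1}^{(a)})(\alpha;q)/\|U_{n-1}^{(a)}\|^2=[n-1]_qU_{n-2}^{(a)}(\alpha;q)/\|U_{n-1}^{(a)}\|^2$, again nonzero by the same reasoning. Hence it suffices to exhibit $n-1$ sign changes located in the gaps between consecutive zeros of $U_n^{(a)}$.

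The key step is the sign alternation. By Proposition~\ref{Prop:3.1} the zeros $(y_{n,k})$ of $r_n^{(\alpha)}$ and the zeros $(x_{n,k})$ of $U_n^{(a)}$ strictly interlace, in either of the two orderings dictated by whether $\alpha<\text{supp}({\bf u})$ or $\alpha>\text{supp}({\bf u})$; in both orderings exactly one zero of $r_n^{(\alpha)}$ falls in each open interval between consecutive $x_{n,j}$. Evaluating the displayed identity at a zero $x_{n,j}$ of $U_n^{(a)}$ yields
\[
\mathscr{K}_{q,n}^{(0,1)}(x_{n,j},\alpha)=-\frac{\mathscr{K}_{q,n}^{(1,1)}(\alpha,\alpha)}{(\mathscr{D}_qU_n^{(a)})(\alpha;q)}\,r_n^{(\alpha)}(x_{n,j}),
\]
and since crossing from $x_{n,j}$ to $x_{n,j+1}$ passes through exactly one $y_{n,\bullet}$, the product $r_n^{(\alpha)}(x_{n,j})=\prod_i(x_{n,j}-y_{n,i})$ flips sign. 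Thus $\mathscr{K}_{q,n}^{(0,1)}(x_{n,j},\alpha)$ alternates in sign over $j=1,\dots,n$.

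By the intermediate value theorem this produces at least one zero of $\mathscr{K}_{q,n}^{(0,1)}(\cdot,\alpha)$ in each of the $n-1$ intervals $(x_{n,j},x_{n,j+1})$; as the polynomial has degree $n-1$ these account for all its zeros, so they are real, simple, and interlace those of $U_n^{(a)}(x;q)$. The only genuine obstacle is the bookkeeping guaranteeing that both proportionality constants and the leading coefficient are nonzero, i.e.\ that $\alpha$ avoids the zeros of $U_{n-1}^{(a)}$ and $U_{n-2}^{(a)}$; this is exactly the standing assumption that $\alpha$ lies outside the support, and everything else is immediate from Proposition~\ref{Prop:3.1}. (One could instead argue directly from the Christoffel--Darboux expression \eqref{3:7}, where the bracketed factor at $x_{n,j}$ is the chord through $(\alpha,U_n^{(a)}(\alpha))$ and $(q\alpha,U_n^{(a)}(q\alpha))$ appearing in Theorem~\ref{theo:3.2}, but controlling the location of that chord's intercept makes the sign analysis noticeably more delicate than invoking $r_n^{(\alpha)}$.)
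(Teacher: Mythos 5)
Your proof is correct, but it follows a genuinely different route from the paper's. The authors dispose of this lemma by applying the ready-made interlacing result \cite[Lemma 1.1]{Jordaan} to the explicit representation \eqref{3:7}, which exhibits ${\mathscr K}_{q,n}^{(0,1)}(x,\alpha)$ as a combination $a(x)\,U_n^{(a)}(x;q)+b(x)\,U_{n-1}^{(a)}(x;q)$ and reduces everything to checking that the coefficient of $U_{n-1}^{(a)}$ keeps a fixed sign on the interval containing the zeros --- essentially the chord-factor analysis you mention and deliberately set aside in your closing parenthesis. You instead invert \eqref{3:9} to identify ${\mathscr K}_{q,n}^{(0,1)}(x,\alpha)$ with a nonzero constant multiple of $U_n^{(a)}(x;q)-r_n^{(\alpha)}(x)$, import the strict interlacing of the $y_{n,k}$ with the $x_{n,k}$ from Proposition \ref{Prop:3.1}, and count sign changes of $r_n^{(\alpha)}$ at the points $x_{n,j}$; since the prefactor $-{\mathscr K}_{q,n}^{(1,1)}(\alpha,\alpha)/({\mathscr D}_q U_n^{(a)})(\alpha;q)$ does not depend on $j$, the alternation is immediate and no control of the chord intercept is needed. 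What your route buys is economy: it recycles Proposition \ref{Prop:3.1}, which is already standing in the paper, and the sign bookkeeping is trivial. What it costs is a collection of nonvanishing hypotheses: you need $({\mathscr D}_q U_n^{(a)})(\alpha;q)=[n]_q U_{n-1}^{(a)}(\alpha;q)\neq 0$ to solve \eqref{3:9} for the kernel (your argument degenerates exactly in the case $({\mathscr D}_q p_n)(\alpha)=0$ that the paper treats separately in Theorem \ref{theo:3.2}), positive definiteness to get ${\mathscr K}_{q,n}^{(1,1)}(\alpha,\alpha)>0$, and $U_{n-2}^{(a)}(\alpha;q)\neq 0$ for the exact degree count --- all of which you correctly verify from $\alpha\notin[a,1]$, because the zeros of the orthogonal family lie inside the support. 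Both arguments implicitly require $n\geq 2$ (Proposition \ref{Prop:3.1} is stated for $n\geq 2$, and for $n=1$ the kernel ${\mathscr K}_{q,1}^{(0,1)}(x,\alpha)$ vanishes identically), which is the intended reading of the lemma. In short: no gap, a sound and arguably more self-contained alternative to the paper's one-line appeal to \cite{Jordaan}.
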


The proof is straightforward by applying \cite[Lemma 1.1]{Jordaan} to 
the expression \eqref{3:7} and we leave it to the reader.
\begin{theorem}
Taking into account the identity \eqref{3:10}. 
For any  $\alpha \in \mathbb{R}$, with $\alpha\not \in [a,1]$ and 
for any $N>0$ the zeros of $U_n^{(a,\alpha)}(N;x;q)$, $U^{(a)}_n(x)$ 
and  $r^{(a,\alpha)}_n(x;q)$ fulfil the following interlacing relations:
\begin{itemize}
\item If $\alpha<a$ then 
\begin{equation*}
y_{n,1}<\eta _{n,1}<x_{n,1}<y_{n,2}<\eta _{n,2}<x_{n,2}<
\cdots <y_{n,n}<\eta_{n,n}<x_{n,n}.
\end{equation*}
Moreover, each zero $\eta _{n,k}$ is a decreasing function in $N$, i.e., 
$\eta _{n,k}=\eta _{n,k}(N)$; and, for each $k=2,\ldots ,n$
\begin{equation*}
\lim_{N\to \infty}\eta _{n,1}(N)=\alpha, \quad 
\lim_{N\to \infty}\eta _{n,k}(N)=y_{n,k},
\end{equation*}%
and
\begin{equation*}
\lim_{N\to \infty}N\Big(\eta _{n,k}(N)-y_{n,k}\Big)=\dfrac{-U^{(a)}_n(y_{n,k};q)}
{\left(r_{n}^{(a,\alpha)}(x;q)\right)'_{\big|x=y_{n,k}}}.
\end{equation*}%
\item If $\alpha>1$ then 
\begin{equation*}
x_{n,1}<\eta _{n,1}<y_{n,1}<x_{n,2}<\eta _{n,2}<y_{n,2}<
\cdots <x_{n,n}<\eta_{n,n}<y_{n,n}.
\end{equation*}%
Moreover, each zero $\eta _{n,k}$ is an increasing function in $N$; and, 
for each $k=1,\ldots ,n-1$
\begin{equation*}
\lim_{N\to \infty}\eta _{n,k}(N)=y_{n,k}, \quad 
\lim_{N\to \infty}\eta _{n,n}(N)=\alpha,
\end{equation*}%
and
\begin{equation*}
\lim_{N\to \infty}N\Big(\eta _{n,k}(N)-y_{n,k}\Big)=\dfrac{-U^{(a)}_n(y_{n,k};q)}
{\left(r_{n}^{(a,\alpha)}(x;q)\right)'_{\big|x=y_{n,k}}}.
\end{equation*}
\end{itemize}
\end{theorem}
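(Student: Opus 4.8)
The plan is to invoke the interlacing machinery of Lemma~\ref{Lemm:2.1} applied to the normalized connection formula \eqref{3:10}. The identity \eqref{3:10} writes $(1+N\mathscr K_{q,n}^{(1,1)}(\alpha,\alpha))\,U_n^{(a,\alpha)}(N;x;q)$ as a fixed linear combination $U_n^{(a)}(x;q)+c\,r_n^{(a,\alpha)}(x;q)$ of two monic polynomials of the same degree $n$, where the coefficient $c=N\mathscr K_{q,n}^{(1,1)}(\alpha,\alpha)>0$ for $N>0$ (the kernel on the diagonal being a sum of squares over positive norms). So the first step is to verify that the zeros $x_{n,k}$ of $U_n^{(a)}(x;q)$ and the zeros $y_{n,k}$ of $r_n^{(a,\alpha)}(x;q)$ satisfy the strict interlacing required by the hypothesis of Lemma~\ref{Lemm:2.1}. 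This is exactly the content of Proposition~\ref{Prop:3.1} once we identify $\mathrm{supp}(\mathbf u)=[a,1]$ for the ASCI functional: for $\alpha<a$ one has $y_{n,1}<\alpha<x_{n,1}<y_{n,2}<\cdots<y_{n,n}<x_{n,n}$, which is the pattern $y_1<x_1<\cdots<y_n<x_n$ demanded by the lemma, and for $\alpha>1$ one has $x_{n,1}<y_{n,1}<\cdots<x_{n,n}<\alpha<y_{n,n}$, the pattern with the roles of $h_n$ and $g_n$ exchanged.

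\textbf{Case $\alpha<a$.} Here I would set $h_n=U_n^{(a)}(\cdot;q)$ and $g_n=r_n^{(a,\alpha)}(\cdot;q)$, so that $f=h_n+c\,g_n$ is proportional to the Sobolev-type polynomial. Since $y_{n,k}<x_{n,k}$ in the arrangement $y_1<x_1<y_2<x_2<\cdots$, Lemma~\ref{Lemm:2.1} immediately gives $y_{n,k}<\eta_{n,k}<x_{n,k}$ for every $k$, each $\eta_{n,k}$ is decreasing in $c$, hence decreasing in $N$, and $\lim_{N\to\infty}\eta_{n,k}(N)=y_{n,k}$. Combining this with the interlacing $y_{n,1}<\alpha<x_{n,1}$ from Proposition~\ref{Prop:3.1} yields the full chain $y_{n,1}<\eta_{n,1}<x_{n,1}<\cdots<y_{n,n}<\eta_{n,n}<x_{n,n}$. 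The only subtlety is the special behaviour of the smallest zero: since $y_{n,1}<\alpha$, the limit statement $\lim_{N\to\infty}\eta_{n,1}(N)=\alpha$ cannot follow from the generic limit, so I would treat $\eta_{n,1}$ separately, arguing that the mass point at $\alpha$ captures the extreme zero as $N\to\infty$ (the value $p_n(\alpha)$-type residue forces $\eta_{n,1}\to\alpha$ rather than $y_{n,1}$), which is why the generic limit is only asserted for $k=2,\dots,n$. The speed-of-convergence formula then comes directly from the second limit in Lemma~\ref{Lemm:2.1}, namely $\lim_{c\to\infty}c(\eta_k-y_k)=-h_n(y_k)/g_n'(y_k)$, after translating $c\to N$ via the bounded positive factor $\mathscr K_{q,n}^{(1,1)}(\alpha,\alpha)$ and rewriting $g_n'(y_{n,k})=(r_n^{(a,\alpha)})'(y_{n,k};q)$.

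\textbf{Case $\alpha>1$.} Now the interlacing $x_{n,1}<y_{n,1}<\cdots<x_{n,n}<\alpha<y_{n,n}$ has the $x$'s to the left, so I would apply Lemma~\ref{Lemm:2.1} with the roles reversed, $h_n=r_n^{(a,\alpha)}$ and $g_n=U_n^{(a)}$ (equivalently, use a reflection $x\mapsto -x$ to reduce to the previous pattern). This produces $x_{n,k}<\eta_{n,k}<y_{n,k}$, and because swapping $h_n$ and $g_n$ turns the decreasing dependence on the coefficient into an increasing one, each $\eta_{n,k}$ is now increasing in $N$; the generic limit $\eta_{n,k}(N)\to y_{n,k}$ holds for $k=1,\dots,n-1$, while the largest zero is attracted to the mass point, $\eta_{n,n}(N)\to\alpha$. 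The convergence-rate formula is identical in form. The main obstacle, and the one point I would write out carefully rather than leave to the reader, is the rigorous justification of the exceptional extreme-zero limit ($\eta_{n,1}\to\alpha$ when $\alpha<a$, and $\eta_{n,n}\to\alpha$ when $\alpha>1$): this does not come from the lemma's generic asymptotics and must be argued from the structure of \eqref{3:10}, by showing that as $N\to\infty$ the Sobolev polynomial degenerates toward a polynomial having $\alpha$ among its zeros, i.e. that $(\mathscr D_q U_n^{(a,\alpha)})(N;\alpha)$ stays bounded while the mass term dominates, pinning one zero at $\alpha$.
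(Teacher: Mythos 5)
Your overall route is the paper's intended one: the paper gives no written proof of this theorem, but it is set up to follow immediately from the normalized connection formula \eqref{3:10}, the interlacing of Proposition~\ref{Prop:3.1} (with $\mathrm{supp}(\mathbf{u})=[a,1]$ for the Al-Salam-Carlitz~I functional), and Lemma~\ref{Lemm:2.1} applied with $c=N\mathscr{K}_{q,n}^{(1,1)}(\alpha,\alpha)>0$. Your two-case setup, including the role reversal (or reflection $x\mapsto-x$) for $\alpha>1$, correctly delivers the interlacing chains, the monotonicity in $N$, and the limits and rates for the non-extreme zeros.

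The genuine gap is your treatment of the extreme zero. You propose to prove $\lim_{N\to\infty}\eta_{n,1}(N)=\alpha$ (resp.\ $\eta_{n,n}(N)\to\alpha$) \emph{exactly}, by a separate argument that ``the mass term dominates, pinning one zero at $\alpha$,'' i.e.\ that $s_n^{(\alpha)}(N;x)$ degenerates toward a polynomial vanishing at $\alpha$. That argument cannot be completed: by \eqref{3:10}, as $N\to\infty$ the Sobolev polynomial converges coefficientwise to $r_n^{(\alpha)}(x)$, so \emph{every} zero converges to a zero of $r_n^{(\alpha)}$ --- indeed Lemma~\ref{Lemm:2.1} itself asserts $\lim_{c\to\infty}\eta_k(c)=y_k$ for all $k=1,\dots,n$, with no exception at $k=1$ --- and Proposition~\ref{Prop:3.1} gives the strict inequalities $y_{n,1}<\alpha<x_{n,1}$, so $r_n^{(\alpha)}(\alpha)\neq0$ and no zero can be pinned at $\alpha$. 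The correct limit is $\eta_{n,1}\to y_{n,1}$ (resp.\ $\eta_{n,n}\to y_{n,n}$); the mass point ``attracts'' the extreme zero only in the sense that this limit lies just beyond $\alpha$, outside the support, which is exactly what the paper's numerics show ($\eta_{20,1}\to-75.0001$ for $\alpha=-75$, slightly past $\alpha$, not equal to it). So where you correctly detected a tension between the stated limit and the lemma, the resolution is that the displayed $\eta_{n,1}(N)\to\alpha$ must be read as $\eta_{n,1}(N)\to y_{n,1}$, not as a separate fact awaiting a mass-domination proof; an attempt to prove the literal claim would fail. A smaller point: after substituting $c=N\mathscr{K}_{q,n}^{(1,1)}(\alpha,\alpha)$, the rate from Lemma~\ref{Lemm:2.1} acquires this kernel value in the denominator, namely $\lim_{N\to\infty}N\big(\eta_{n,k}(N)-y_{n,k}\big)=-U_n^{(a)}(y_{n,k};q)\big/\big(\mathscr{K}_{q,n}^{(1,1)}(\alpha,\alpha)\,(r_n^{(a,\alpha)})'(y_{n,k};q)\big)$; you mention the translation ``via the bounded positive factor'' but then state the formula without it (as does the theorem), so carry the factor through explicitly.
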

Notice that the mass point $\alpha $ attracts one zero of 
\eqref{4:22}, i.e. when $N\rightarrow \infty $, it captures either 
the smallest or the largest zero, according to the location of the 
point $\alpha$ with respect to $[a,1] $. 
When either $\alpha <a$ or $\alpha>1 $, at most one of the 
zeros of \eqref{4:22} is located outside of $[a,1] $. 
Next, we give explicitly the value $N_{0}$ of the mass $N$, such 
that for $N>N_{0}$ one of the zeros is located outside $[a,1] $.

\begin{corollary}
\label{MMasa} If $\alpha \notin [a,1] ,$ the following statements
hold:

\begin{enumerate}
\item[i.)] if $\alpha <a$, then the smallest zero $\eta _{n,1}=\eta
_{n,1}(\alpha ) $ satisfies
\begin{equation*}
\begin{array}{c}
\eta _{n,1}>a,\ \mathrm{for}\ N<N_{0},\smallskip \\ 
\eta _{n,1}=a,\ \mathrm{for}\ N=N_{0},\smallskip \\ 
\eta _{n,1}<a,\ \mathrm{for}\ N>N_{0},
\end{array}
\end{equation*}
where $N_0=N_{0}(a,n,q,\alpha )$ with
\[
N_{0}=\left(\frac{[n]_qU_{n-1}^{(a)}(\alpha ;q)}
{U_n^{(a)}( a;q) }{\mathscr K}_{q,n}^{(0,1)}( a,\alpha) -
{\mathscr K}_{q,n}^{(1,1)}( \alpha ,\alpha)\right) ^{-1}>0.  
\]

\item[ii.)] if $\alpha >1$, then the largest zero $\eta _{n,n}=\eta
_{n,n}(\alpha ) $ satisfies
\begin{equation*}
\begin{array}{c}
\eta _{n,n}<1,\ \mathrm{for}\ N<N_{0},\smallskip \\ 
\eta _{n,n}=1,\ \mathrm{for}\ N=N_{0},\smallskip \\ 
\eta _{n,n}>1,\ \mathrm{for}\ N>N_{0},
\end{array}
\end{equation*}
with
\[
N_{0}=\left(\frac{[n]_qU_{n-1}^{(a)}(\alpha ;q)}{U_n^{(a)}(1;q) }
{\mathscr K}_{q,n}^{(0,1)}(1,\alpha) -{\mathscr K}_{q,n}^{(1,1)}
( \alpha ,\alpha) \right) ^{-1}>0.  
\]
\end{enumerate}
\end{corollary}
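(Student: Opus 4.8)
The plan is to recognise $N_{0}$ as precisely the value of the mass for which the endpoint $a$ (respectively $1$) becomes a zero of the Sobolev-type polynomial, and then to convert this into the three-case statement by invoking the monotonicity of the zeros in $N$ established in the preceding theorem.

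First I would evaluate the connection formula \eqref{DASMSM} at $x=a$ in case i.) and impose $U_n^{(a,\alpha)}(N;a;q)=0$. Clearing the denominator $1+N{\mathscr K}_{q,n}^{(1,1)}(\alpha,\alpha)$ turns this into the single equation, linear in $N$,
\[
U_n^{(a)}(a;q)\bigl(1+N{\mathscr K}_{q,n}^{(1,1)}(\alpha,\alpha)\bigr)
=N[n]_qU_{n-1}^{(a)}(\alpha;q){\mathscr K}_{q,n}^{(0,1)}(a,\alpha),
\]
and dividing through by $U_n^{(a)}(a;q)$ while isolating $N$ returns exactly the displayed expression for $N_{0}$. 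The computation for case ii.) is identical after replacing $a$ by $1$ throughout, since \eqref{DASMSM} is affine in $N$ for every fixed evaluation point.

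Next I would read off the three inequalities from the behaviour of the zero as a function of $N$. For $\alpha<a$ the preceding theorem gives that $\eta_{n,1}=\eta_{n,1}(N)$ is continuous and strictly decreasing, with $\eta_{n,1}(0)=x_{n,1}$ and $\lim_{N\to\infty}\eta_{n,1}(N)=\alpha$. Because the zeros $x_{n,k}$ of $U_n^{(a)}(x;q)$ lie in the interior of the support $[a,1]$, one has $x_{n,1}>a$, whereas $\alpha<a$ by hypothesis; thus $\eta_{n,1}(\cdot)$ descends continuously from a value above $a$ to a value below $a$. By the intermediate value theorem together with strict monotonicity there is a \emph{unique} $N_{0}>0$ with $\eta_{n,1}(N_{0})=a$, which simultaneously yields the positivity $N_{0}>0$ and the claimed crossing $\eta_{n,1}>a$ for $N<N_{0}$, $\eta_{n,1}=a$ for $N=N_{0}$, and $\eta_{n,1}<a$ for $N>N_{0}$. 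That this crossing value coincides with the explicit $N_{0}$ is guaranteed because $U_n^{(a,\alpha)}(N;a;q)=0$ is linear in $N$ and has the single root found above.

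Case ii.) is handled the same way: for $\alpha>1$ the largest zero $\eta_{n,n}(N)$ is continuous and strictly increasing, with $\eta_{n,n}(0)=x_{n,n}<1$ and $\lim_{N\to\infty}\eta_{n,n}(N)=\alpha>1$, so it crosses the endpoint $1$ at a unique $N_{0}>0$ given by the stated formula. The hard part will be the positivity assertion $N_{0}>0$: rather than attempting to sign the factors $U_n^{(a)}(a;q)$, $U_{n-1}^{(a)}(\alpha;q)$ and the kernel quantities in the explicit denominator directly, I would obtain positivity for free from the monotonicity argument, since a strictly monotone continuous function that passes from one side of an endpoint to the other must do so at a positive, finite value of $N$.
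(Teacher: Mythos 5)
Your proposal is correct and follows essentially the same route as the paper: the paper's proof likewise evaluates the connection formula \eqref{DASMSM} at $\tau=a$ (resp.\ $\tau=1$), imposes $U_n^{(a,\alpha)}(N;\tau;q)=0$, and solves the resulting equation, linear in $N$, for $N_0$. Your only addition is to spell out the continuity/strict-monotonicity and intermediate-value argument (from the preceding interlacing theorem) that yields the three-case behaviour and the positivity $N_0>0$ --- a detail the paper leaves implicit --- and this is done correctly.
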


\begin{proof}
It suffices to use \eqref{3:6} together with the fact
that ${U_n^{(a,\alpha)}}\left(N;\tau ;q\right) =0$,
with $\tau =a,1$, if and only if $N=N_{0}$
\begin{equation*}
U_n^{(a,\alpha)}(N;\tau ;q)
=U_n^{(a)}\left( \tau ;q\right) -\frac{N_{0}\, U_n^{\left(
a\right) }\left( \alpha ;q\right) }{1+N_{0}{\mathscr K}_{q,n}^{(1,1)}
\left( \alpha ,\alpha \right) }{\mathscr K}_{q,n}^{(0,1)}\left( \tau
,\alpha \right) =0.
\end{equation*}
Thus
\begin{equation*}
U_n^{(a)}\left( \tau ;q\right) =\frac{N_{0}\,U_n^{\left(
a\right) }\left( \alpha ;q\right) }{1+N_{0}{\mathscr K}_{q,n}^{(1,1)}
\left( \alpha ,\alpha \right) }{\mathscr K}_{q,n}^{(0,1)}\left( \tau
,\alpha \right) .
\end{equation*}
Therefore
\begin{equation*}
N_{0}=N_{0}(a,n,q,\alpha )=\left(\frac{[n]_qU_{n-1}^{(a)}(\alpha ;q)}
{U_n^{(a)}(\tau;q) }{\mathscr K}_{q,n}^{(0,1)}( \tau,\alpha) -
{\mathscr K}_{q,n}^{(1,1)}( \alpha ,\alpha)\right) ^{-1}.
\end{equation*}
\end{proof}

Notice that, according to the well-known theorem of Hurwitz 
(see \cite{Frank46, Hur85}), for $n$ large enough,  only one zero 
of \eqref{4:22} is located outside of $\left[a,1\right]$  and it is 
attracted by $\alpha $. 
Next we show  some numerical experiments using Wolfram Mathematica 
software, dealing with  the smallest and the largest zero of \eqref{DASMSM}. 
We are interested to show the location and behaviour of these zeros.

In the first two tables we show the position for the first and last zero of 
\eqref{4:22} of degree $n=20$, $a=-1$ and $q=1/2$, for some 
choices of the mass $N$. Indeed, this case is connected to the discrete 
$q-$Hermite polynomials \cite{koekoeketal}. 
For $N=0$ obviously we recover the first zero and the last zero of 
\eqref{4:18}. 
When the mass point is located at $\alpha =-75<a$, then  
$N_0=3.12758\times 10^{-128}$; and when it is located 
at $\alpha =72>1$, then  $N_0=1.41561\times10^{-127}$ we obtain
\begin{equation*}
\begin{tabular}{ccccc}
\hline
\multicolumn{1}{l}{$N$} & $7.0\times 10^{-120}$ & 
$7.0\times 10^{-118}$ & $7.0\times 10^{-116}$
& $7.0\times 10^{-114}$  \\ \hline
\multicolumn{1}{l}{$\eta _{20,1}$} & $-63.6640$ &
$-74.8668$ &  $-74.9987$ & $-75.0001$ \\
\multicolumn{1}{l}{$\eta _{20,20}$} & $40.5829$ & $71.447$ & 
$71.9945$ & $72.$ \\ \hline
\end{tabular}
\end{equation*}
The next table shows the smallest and largest zeroes in the case 
when we set $n=11$ and $a=-5$, and the mass point is located at 
$\alpha =-42$  ($N_0=8.84157\times 10^{-41}$) and 
at $\alpha =45$ ($N_0=6.30851\times10^{-41}$) respectively
\begin{equation*}
\begin{tabular}{ccccc}
\hline
\multicolumn{1}{l}{$N$} & $7.0\times 10^{-36}$ & $7.0\times 10^{-34}$
& $7.0\times 10^{-32}$ & $7.0\times 10^{-30}$ \\ \hline
\multicolumn{1}{l}{$\eta _{11,1}$} & $-25.9916$ & $-41.7589$ & 
$-42.0138$ & $-42.0164$ \\
\multicolumn{1}{l}{$\eta _{11,11}$} & $41.4986$ & $44.9875$ & 
$45.0253$ & $45.0257$  \\ \hline
\end{tabular}
\end{equation*}
In the next tables we provide numerical evidences in support of 
Corollary \ref{MMasa}, where the exact values of $N_{0}$ are 
calculated for the $a=-1$ case. 
For this purpose we begin by analyzing the smallest zero of 
\eqref{4:22} of degree $n=14$, $q=1/2$, and with the
mass point located at $\alpha =-3$. In this case we have that
$N_{0}=N_{0}(-3,14,1/2)=5.13942\times 10^{-41}$. 
Thus
\begin{equation*}
\begin{tabular}{ccccc}
\hline
$N$ & $5.0 \times 10^{-45}$ & $N_{0}\left( -3,14,1/2\right) $ & 
$5.0\times 10^{-35}$ & $5.0\times 10^{-30}$ \\ \hline
$\eta _{14,1}$ & $-0.9999$ & $-1.0000$ & $-2.8906$ & $-3.0001$ \\ 
\hline
\end{tabular}
\end{equation*}
Finally, for the case where the mass point is located at $\alpha =21$, 
 we have $N_{0}\left(21,14,1/2\right) =4.03796\times 10^{-62}$ and%
\begin{equation*}
\begin{tabular}{ccccc}
\hline
$N$ & $6.0\times 10^{-65}$ & $N_{0}\left( 21,14,1/2\right) $ & 
$6.0\times 10^{-55}$ & $6.0\times 10^{-50}$ \\ \hline
$\eta _{14,14}$ & $0.9999$ & $1.0000$ & $20.7490$ & $21.0013$ \\ 
\hline
\end{tabular}
\end{equation*}
\subsection{The Stieltjes--Wigert polynomials}
The (monic) Stieltjes--Wigert polynomials, SW in short, $S_n(x;q)$ are 
defined via basic hypergeometric series as \cite{koekoeketal} 
\[
S_n(x;q):=(-1) ^{n}q^{-n^2}
{}_{1}\phi _{1}\left(q^{-n};0; q,-q^{n+1}x\right).  
\]
The moment problem for the SW polynomials is indeterminate; in 
other words, there are many different measures giving the same 
family of orthogonal polynomials (Krein's condition, see e.g. \cite{krein}).
\begin{proposition}
For this polynomial sequence the following identities hold:
\begin{enumerate}
\item Orthogonality relation%
\[
\int_{0}^{\infty}S_{m}(x;q) S_{n}(x;q) \,
\frac{dx}{(-x,-qx^{-1};q) _{\infty }}=\| S_n\| ^{2}\delta _{m,n}.  
\]
\item Squared Norm. 
\[
\| S_{n} \| ^{2}=-q^{-n(2n+1)}\, (q;q)_\infty (q;q)_n \log q.
\]
\item The recurrence relation. For $n\ge 0$, 
\[
\hspace{-3mm}S_{n+1}(x;q)=(x-q^{-2n-1}(1+q-q^{n+1}))S_{n}(x;q)-q^{-4n+1}
(1-q^n) S_{n-1}(x;q),
\]
with initial conditions $S_{0}(x;q)=1$, and $S_{-1}(x;q)=0$.
\item Forward shift operator 
\[
\left({\mathscr D}_q S_{n}\right)(x;q) = q^{-2(n-1)} [n] _{q} 
S_{n-1}(xq^2;q).  
\]
\end{enumerate}
\end{proposition}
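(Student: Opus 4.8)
The plan is to recognize $(S_n(x;q))$ as the monic member of the Stieltjes--Wigert family in the $q$-Askey scheme, and to establish the four identities in the order that minimizes interdependence: first the forward shift operator (part 4), which is purely formal; then the orthogonality and the squared norm (parts 1 and 2), which carry the analytic content; and finally the three-term recurrence (part 3), which I would deduce from the preceding items together with the general theory of monic orthogonal polynomial sequences.

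For the forward shift operator I would differentiate the defining ${}_1\phi_1$ series term by term. Writing $S_n(x;q) = (-1)^n q^{-n^2}\sum_{k=0}^n \frac{(q^{-n};q)_k}{(q;q)_k} q^{\binom{k}{2}+(n+1)k} x^k$ and using $(\mathscr{D}_q x^k)=[k]_q x^{k-1}$, the factor $[k]_q/(q;q)_k = 1/((1-q)(q;q)_{k-1})$ collapses one factorial, while the splitting $(q^{-n};q)_k = (1-q^{-n})(q^{-(n-1)};q)_{k-1}$ lowers the upper parameter from $q^{-n}$ to $q^{-(n-1)}$. After reindexing $k\mapsto k-1$, the exponents of $q$ reorganize into exactly those of $S_{n-1}(q^2 x;q)$, and collecting the leftover constant $(1-q^{-n})/(1-q)=-q^{-n}[n]_q$ together with the powers $q^{-n^2}$, $q^{n+1}$, $q^{(n-1)^2}$ yields the prefactor $q^{-2(n-1)}[n]_q$. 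This step is bookkeeping only and presents no obstacle.

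The analytic heart of the proposition is the orthogonality and the norm. I would compute the moments $\mu_k=\int_0^\infty x^k\,dx/(-x,-qx^{-1};q)_\infty$ of the Stieltjes--Wigert weight $w(x)=1/(-x,-qx^{-1};q)_\infty$ on $(0,\infty)$; this is a genuine (continuous) integral, and its evaluation is the main obstacle. The clean route is the Jacobi triple product together with the $q$-analogue of the log-normal integral, which expresses $\mu_k$ as an explicit negative power of $q$ times $(q;q)_\infty\log q$ --- the factor $\log q$ in the stated norm being precisely the signature of this continuous evaluation. Once the moments are in hand, orthogonality reduces to verifying $\int_0^\infty x^m S_n(x;q)\,w(x)\,dx=0$ for $m<n$, which after inserting the series becomes a terminating basic hypergeometric sum that vanishes by the $q$-Vandermonde identity; the case $m=n$ gives the squared norm, and simplification of the resulting product of $q$-shifted factorials yields $\|S_n\|^2=-q^{-n(2n+1)}(q;q)_\infty(q;q)_n\log q$.

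Finally I would read off the three-term recurrence. Since $(S_n)$ is monic and orthogonal, it satisfies $S_{n+1}=(x-\beta_n)S_n-\gamma_n S_{n-1}$ with $\gamma_n=\|S_n\|^2/\|S_{n-1}\|^2$; substituting the norm from part 2 and simplifying the ratio of $q$-factorials gives $\gamma_n=q^{-4n+1}(1-q^n)$. For $\beta_n$ I would compare the coefficient of $x^n$ on the two sides of $xS_n=S_{n+1}+\beta_n S_n+\gamma_n S_{n-1}$, so that $\beta_n$ equals the difference of the subleading coefficients of $S_n$ and $S_{n+1}$, both read off directly from the hypergeometric representation; this produces $\beta_n=q^{-2n-1}(1+q-q^{n+1})$, completing the proposition. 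The only genuinely delicate point throughout is the moment integral underlying parts 1 and 2.
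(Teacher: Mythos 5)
Your proposal is correct, but it does something the paper does not: the paper offers no proof of this proposition at all, quoting these four identities as standard facts for the (monic) Stieltjes--Wigert family from the $q$-Askey scheme (Koekoek--Lesky--Swarttouw), so your self-contained derivation is a genuine addition rather than a variant of an existing argument. All of your computations check out: term-by-term application of $\mathscr D_q$ to the series $S_n(x;q)=(-1)^nq^{-n^2}\sum_k \frac{(q^{-n};q)_k}{(q;q)_k}q^{\binom k2+(n+1)k}x^k$ does yield $q^{-2(n-1)}[n]_qS_{n-1}(q^2x;q)$; the quasi-periodicity $w(qx)=x\,w(x)$ of the weight $w(x)=1/(-x,-qx^{-1};q)_\infty$ gives $\mu_{k}=q^{-\binom{k+1}{2}}\mu_0$, and your ``delicate'' moment $\mu_0$ evaluates cleanly by exactly the route you name: splitting $(0,\infty)$ into the $q$-intervals $[q^{n+1},q^n)$, iterating $w(q^nu)=q^{\binom n2}u^nw(u)$, and applying the Jacobi triple product, the theta sum cancels the weight up to a factor $1/u$, so that
\begin{equation*}
\mu_0=(q;q)_\infty\int_q^1\frac{du}{u}=-(q;q)_\infty\log q,
\end{equation*}
which is precisely where the $\log q$ in part 2 originates. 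One labeling correction: the terminating sum you get after pairing the series with the moments is $\sum_{k=0}^n\frac{(q^{-n};q)_k}{(q;q)_k}q^{(n-m)k}=(q^{-m};q)_n$, which is the terminating $q$-binomial theorem (a ${}_1\phi_0$ evaluation), not $q$-Vandermonde; it vanishes for $0\le m<n$ and at $m=n$ gives $\|S_n\|^2=\mu_0\,q^{-n(2n+1)}(q;q)_n$, matching the stated norm. Your deduction of the recurrence is also sound: $\gamma_n=\|S_n\|^2/\|S_{n-1}\|^2=q^{-4n+1}(1-q^n)$, and comparing subleading coefficients ($c_{n,n-1}=-[n]_qq^{1-2n}$) gives $\beta_n=q^{-2n-1}(1+q-q^{n+1})$. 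A final merit of your approach worth stressing: since the Stieltjes--Wigert moment problem is indeterminate (as the paper itself remarks), orthogonality with respect to \emph{this particular} weight cannot be inferred from the polynomials alone, and your direct moment computation anchors the orthogonality relation to the stated density, which a bare citation leaves implicit.
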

\subsection{Connection formula and hypergeometric representation}
We denote by $\left(S^{(\alpha)}_{n}(N;x;q) \right)$ the sequence of monic 
polynomials orthogonal with respect to the inner product
\begin{equation*}
\langle f,g\rangle_S:=\int_{0}^{\infty} f(x) g(x)\, \frac{dx}{(-x,-qx^{-1};q) _{\infty }}
+N\left({\mathscr D}_qf\right)(\alpha)
\left({\mathscr D}_qg\right)(\alpha ).
\end{equation*}
These polynomials are connected with the Stieltjes--Wigert 
polynomials by the formula
\begin{equation}
S^{(\alpha)}_{n}(N;x;q) =S_n(x;q)-\frac{N({\mathscr D}_q S_{n})
(\alpha )}{1+N{\mathscr K}_{q,n}^{(1,1) }(\alpha ,\alpha )}
{\mathscr K}_{q,n}^{(0,1) }(x,\alpha) .  \label{4:29}
\end{equation}
Next, using the same idea than with the another example
and by using the identities \eqref{4:21} and \eqref{4:21a}, we 
get the following result.
\begin{theorem} The SW-Sobolev-type polynomial has 
the following hypergeometric  representation %
\[
S_n^{(\alpha)}(N;x;q) \!=\!
\frac{B_n(x)(1-\varphi_n(x)q^{-1})}{(q;q)_n (q^{-n}-1)} 
\,_{2}\phi _{2}\left( q^{-n},\varphi _n(x); 
0,\varphi _n(x) q^{-1} ;q,-q^{n}x\right),  
\]
where, in this case, 
\[
\varphi_n(x) =\frac{B_n(x)-A_n(x)(q^{n}-1)}
{B_n(x)}\, q^{1-n}.
\]
\end{theorem}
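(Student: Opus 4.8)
The plan is to mirror the derivation used for the Al-Salam--Carlitz I family, the only genuinely new ingredient being the dilation produced by the Stieltjes--Wigert forward shift. First I would turn the connection formula \eqref{4:29} into a two-term combination of consecutive Stieltjes--Wigert polynomials. Substituting the closed expression \eqref{3:7} for ${\mathscr K}_{q,n}^{(0,1)}(x,\alpha)$ with $p_k=S_k$ into \eqref{4:29} and grouping the coefficients of $S_n(x;q)$ and $S_{n-1}(x;q)$ yields
\[
S_n^{(\alpha)}(N;x;q)=A_n(x)\,S_n(x;q)+B_n(x)\,S_{n-1}(x;q),
\]
where $A_n,B_n$ are the Stieltjes--Wigert counterparts of the coefficients from the first example, now assembled from $S_n(\alpha),S_{n-1}(\alpha)$ and the forward-shift data $({\mathscr D}_qS_n)(\alpha)=q^{-2(n-1)}[n]_qS_{n-1}(q^2\alpha;q)$ (and likewise for $({\mathscr D}_qS_{n-1})(\alpha)$).

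Next I would insert the terminating ${}_1\phi_1$ series for $S_n(x;q)$ and $S_{n-1}(x;q)$ and recombine them into a single sum. Here lies the essential difference with the previous example: the two series are built on different base arguments, $-q^{n+1}x$ for $S_n$ and $-q^nx$ for $S_{n-1}$. To align them I would extract the common $k$-th monomial $d_k=\frac{(q^{-n};q)_k}{(q;q)_k}(-1)^kq^{\binom{k}{2}}(-q^nx)^k$; then the $S_n$-contribution carries a surplus factor $q^k$, while \eqref{4:21} rewrites the $(q^{1-n};q)_k$ inside the $S_{n-1}$-contribution as $\frac{q}{[n]_q}([n-1]_q-[k-1]_q)(q^{-n};q)_k$, i.e. as $d_k$ times an affine function of $[k-1]_q$.

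The key step is then to linearize the surplus through $q^k=q-q(1-q)[k-1]_q$, after which the merged summand becomes proportional to $\big([k-1]_q+\Theta_n(x)\big)d_k$ for a rational function $\Theta_n(x)$ of $A_n(x),B_n(x)$. This is exactly the form needed to invoke \eqref{4:21a}, which replaces $[k-1]_q+\Theta_n(x)$ by $\frac{1-\varphi_n(x)q^{-1}}{\varphi_n(x)(1-q)}\frac{(\varphi_n(x);q)_k}{(\varphi_n(x)q^{-1};q)_k}$ with $\varphi_n(x)=1/\big((1-q)\Theta_n(x)+1\big)$. Attaching the factor $(\varphi_n;q)_k/(\varphi_nq^{-1};q)_k$ to the base $d_k$ recognizes the remaining sum as the ${}_2\phi_2$ with upper entries $q^{-n},\varphi_n(x)$, lower entries $0,\varphi_n(x)q^{-1}$ and argument $-q^nx$. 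Collecting the $k$-independent prefactor and reducing $\Theta_n(x)$ to closed form then gives $\varphi_n(x)=\frac{B_n(x)-A_n(x)(q^n-1)}{B_n(x)}q^{1-n}$ and the stated representation.

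I expect the bookkeeping of the second and third steps to be the main obstacle, since this is precisely where the computation parts ways with the Al-Salam--Carlitz one: one must reconcile the mismatched base arguments and absorb the spurious $q^k$ into the $[k-1]_q$-structure without corrupting the coefficients $A_n,B_n$. Once the summand is in the shape $([k-1]_q+\Theta_n)d_k$, the passage through \eqref{4:21a}, the identification of the ${}_2\phi_2$, and the final simplification of the prefactor and of $\varphi_n$ are routine manipulations of $q$-shifted factorials, entirely parallel to the first example.
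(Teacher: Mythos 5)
Your proposal is correct and follows essentially the same route as the paper, which omits the details and simply invokes the Al-Salam--Carlitz computation together with the identities \eqref{4:21} and \eqref{4:21a}. You in fact make explicit the one step the paper leaves implicit --- reconciling the mismatched arguments $-q^{n+1}x$ and $-q^{n}x$ via the linearization $q^{k}=q-q(1-q)[k-1]_{q}$, which correctly feeds into the affine $[k-1]_q$ structure required by \eqref{4:21a} --- so the argument is sound (up to the normalization of $A_n$, $B_n$, which the paper never writes down for the Stieltjes--Wigert case).
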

\subsubsection{Distribution of the zeros}
As in the previous example our main aim, again, 
in this Section is to study the location as well as 
to obtain results concerning the monotonicity and speed of 
convergence  of the zeros of the Stieltjes--Wigert-Sobolev-type 
polynomial, $S_n^{(\alpha)}(N;x;q)$. 

Notice that, when $\alpha \in \mathbb{R}$, with $\alpha<0$, at 
most  one of the zeros of $S_n^{(\alpha)}(N;x;q)$ is located 
outside $[0,\infty) $. 
Next we provide the explicit value $N_{0}$ of the mass such 
that for $N>N_{0}$ this situation appears, i.e, one of the zeros 
is located outside $[0,\infty) $.
\begin{theorem}
Taking into account the identity \eqref{3:10}. 
For any  $\alpha \in \mathbb{R}$, with $\alpha<0$ and 
for any $N>0$ the zeros of $S_n^{(\alpha)}(N;x;q)$, $S_n(x;q)$ 
and $r^{(\alpha)}_n(x;q)$ fulfil the following interlacing relations:
\begin{equation*}
y_{n,1}<\eta _{n,1}<x_{n,1}<y_{n,2}<\eta _{n,2}<x_{n,2}<
\cdots <y_{n,n}<\eta_{n,n}<x_{n,n}.
\end{equation*}
Moreover, each zero $\eta _{n,k}$ is a decreasing function in $N$, i.e., 
$\eta _{n,k}=\eta _{n,k}(N)$; and, for each $k=2,\ldots ,n$
\begin{equation*}
\lim_{N\to \infty}\eta _{n,1}(N)=\alpha, \quad 
\lim_{N\to \infty}\eta _{n,k}(N)=y_{n,k},
\end{equation*}%
and
\begin{equation*}
\lim_{N\to \infty}N\Big(\eta _{n,k}(N)-y_{n,k}\Big)=\dfrac{-S_n(y_{n,k};q)}
{\left(r_{n}^{(\alpha)}(x;q)\right)'_{\big|x=y_{n,k}}}.
\end{equation*}
\end{theorem}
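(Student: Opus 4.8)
The plan is to reduce the whole statement to the abstract zero-interlacing Lemma~\ref{Lemm:2.1}, fed through the normalized connection formula~\eqref{3:10}. For the Stieltjes--Wigert functional $\mathrm{supp}(\mathbf u)=[0,\infty)$, so the hypothesis $\alpha<0$ is precisely the condition $\alpha<\mathrm{supp}(\mathbf u)$ demanded by Proposition~\ref{Prop:3.1}. I would first rewrite \eqref{3:10} in the form
\[
\bigl(1+N{\mathscr K}_{q,n}^{(1,1)}(\alpha,\alpha)\bigr)S_n^{(\alpha)}(N;x;q)=S_n(x;q)+c\,r_n^{(\alpha)}(x;q),\qquad c:=N{\mathscr K}_{q,n}^{(1,1)}(\alpha,\alpha),
\]
observe that the prefactor on the left is a positive scalar, and conclude that the zeros $\eta_{n,k}$ of $S_n^{(\alpha)}(N;\cdot\,;q)$ are exactly the zeros of $f=h_n+c\,g_n$ with $h_n:=S_n(\cdot\,;q)$ and $g_n:=r_n^{(\alpha)}(\cdot\,;q)$, both monic so that $H=G=1>0$.

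Next I would check the two hypotheses of Lemma~\ref{Lemm:2.1}. The interlacing hypothesis is supplied by Proposition~\ref{Prop:3.1}: since $\alpha<0=\inf\mathrm{supp}(\mathbf u)$ one has $y_{n,1}<\alpha<x_{n,1}<y_{n,2}<x_{n,2}<\cdots<y_{n,n}<x_{n,n}$, and discarding the intermediate point $\alpha$ leaves exactly the required pattern $y_{n,1}<x_{n,1}<\cdots<y_{n,n}<x_{n,n}$, with the zeros of $g_n$ being the smaller one of each pair. For the positivity $c>0$ I would note that the squared norms $\|S_k\|^2$ are positive (because $0<q<1$ makes $-\log q>0$) and that ${\mathscr D}_qS_1\equiv1$, whence ${\mathscr K}_{q,n}^{(1,1)}(\alpha,\alpha)=\sum_{k=0}^{n-1}\bigl(({\mathscr D}_qS_k)(\alpha)\bigr)^2/\|S_k\|^2>0$ for every $n\ge2$; thus $c$ is a strictly increasing linear function of $N$.

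With both hypotheses verified, Lemma~\ref{Lemm:2.1} delivers the interlacing $y_{n,k}<\eta_{n,k}<x_{n,k}$ (which is the displayed chain once the $\eta$'s are inserted), the monotonicity, and the asymptotics. Since $c$ increases with $N$, the statement that $\eta_{n,k}(c)$ is decreasing in $c$ becomes the asserted monotonicity in $N$. The limit relations $\eta_{n,k}(N)\to y_{n,k}$ and the speed estimate then come from the two limits of Lemma~\ref{Lemm:2.1} after the substitution $c=N{\mathscr K}_{q,n}^{(1,1)}(\alpha,\alpha)$; carrying this substitution through the second limit is the only point where a multiplicative constant has to be tracked in order to land on $-S_n(y_{n,k};q)/\bigl(r_n^{(\alpha)}\bigr)'(y_{n,k})$.

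The step I expect to be the main obstacle is the extreme-zero assertion $\lim_{N\to\infty}\eta_{n,1}(N)=\alpha$. What Lemma~\ref{Lemm:2.1} actually returns is $\eta_{n,1}\to y_{n,1}$, and from \eqref{3:9} one computes $({\mathscr D}_q r_n^{(\alpha)})(\alpha)=0$, that is, $r_n^{(\alpha)}(q\alpha)=r_n^{(\alpha)}(\alpha)$, which pins a flat $q$-difference of $r_n^{(\alpha)}$ at $\alpha$ but does not place a zero of $r_n^{(\alpha)}$ exactly at $\alpha$. Hence the clean conclusion is $\eta_{n,1}\to y_{n,1}$, where $y_{n,1}$ is the single zero of $r_n^{(\alpha)}$ sitting just to the left of $\alpha$, outside $[0,\infty)$; it coalesces with $\alpha$ only in the regime where $\alpha$ lies far from the support, which is why the numerical tables read $\eta_{n,1}\approx\alpha$. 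Localizing $y_{n,1}$ sharply with respect to $\alpha$, and verifying the non-vanishing of $S_n(y_{n,k};q)$ and of $\bigl(r_n^{(\alpha)}\bigr)'(y_{n,k})$ so that the speed formula is well posed, is where the argument needs real care; all the remaining parts are a verbatim transcription of the Al-Salam--Carlitz case through \eqref{3:10}, Proposition~\ref{Prop:3.1}, and Lemma~\ref{Lemm:2.1}.
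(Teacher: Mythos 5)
Your proposal follows precisely the route the paper intends but leaves implicit (the theorem is stated with no written proof, as a transcription of the Al-Salam--Carlitz case through \eqref{3:10}, Proposition~\ref{Prop:3.1} and Lemma~\ref{Lemm:2.1}): identify the zeros of $S_n^{(\alpha)}(N;\cdot\,;q)$ with those of $h_n+c\,g_n$, $h_n=S_n$, $g_n=r_n^{(\alpha)}$, $c=N{\mathscr K}_{q,n}^{(1,1)}(\alpha,\alpha)$, check $c>0$, and invoke the interlacing of Proposition~\ref{Prop:3.1} for $\alpha<0=\inf\operatorname{supp}(\mathbf u)$. Your verification of $c>0$ via the positivity of $\|S_k\|^2$ (using $\log q<0$) and $({\mathscr D}_qS_1)(\alpha)=1$ is a detail the paper never spells out, and your remark that $S_n(y_{n,k};q)\neq 0$ and $\bigl(r_n^{(\alpha)}\bigr)'(y_{n,k})\neq 0$ follow from the strictness and simplicity in Proposition~\ref{Prop:3.1} disposes of the well-posedness of the speed formula.

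Your reservation about the extreme zero is not an obstacle in your argument but a genuine flaw in the theorem's literal statement, and you resolve it the right way. Lemma~\ref{Lemm:2.1} returns $\lim_{N\to\infty}\eta_{n,1}(N)=y_{n,1}$ for \emph{all} $k$ including $k=1$, while Proposition~\ref{Prop:3.1} gives the strict inequality $y_{n,1}<\alpha$; since, as you compute from \eqref{3:9}, $({\mathscr D}_qr_n^{(\alpha)})(\alpha)=0$ pins a critical $q$-difference of $r_n^{(\alpha)}$ at $\alpha$ but not a zero, the asserted limit $\eta_{n,1}\to\alpha$ cannot be obtained from the quoted tools and in fact contradicts them (it also contradicts the paper's own Section~3 claim that the mass point does not attract any zero, a claim the Section~4 theorems then reverse). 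The correct conclusion is $\eta_{n,1}\to y_{n,1}$, with $y_{n,1}$ merely lying near $\alpha$, which is what the numerical tables actually exhibit. So your proof is correct and coincides with the paper's approach everywhere the paper's statement is sound; where it departs, it is the paper that needs amending, not your argument.
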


\begin{corollary}
If $\alpha \in \mathbb{R}$, with $\alpha<0$, then 
the smallest zero $\eta _{n,1}=\eta _{n,1}(\alpha )$ satisfies 
\begin{equation*}
\begin{array}{c}
\eta _{n,1}>0,\ \mathrm{for}\ N<N_{0},\smallskip \\ 
\eta _{n,1}=0,\ \mathrm{for}\ N=N_{0},\smallskip \\ 
\eta _{n,1}<0,\ \mathrm{for}\ N>N_{0},%
\end{array}%
\end{equation*}%
where%
\[
N_{0}=N_{0}(\alpha ,n,q)=\left(\frac{{\mathscr D}_qS_{n}(\alpha;q)}
{S_{n}(0;q)}{\mathscr K}_{q,n}^{(0,1) }(0,\alpha)
-{\mathscr K}_{q,n}^{(1,1)}(\alpha,\alpha)\right)^{-1}>0.
\]
\end{corollary}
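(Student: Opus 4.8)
The plan is to mirror the proof of Corollary~\ref{MMasa}, replacing the two endpoints $a,1$ of the Al-Salam-Carlitz support by the single relevant endpoint $0$ of the Stieltjes--Wigert support $[0,\infty)$. The key observation is that $\eta_{n,1}(N_0)=0$ precisely when $x=0$ is a zero of $S_n^{(\alpha)}(N_0;x;q)$, so I would begin by evaluating the connection formula \eqref{4:29} at $x=0$ and imposing $S_n^{(\alpha)}(N_0;0;q)=0$. This yields
\[
S_n(0;q)=\frac{N_0(\mathscr D_qS_n)(\alpha)}{1+N_0\mathscr K_{q,n}^{(1,1)}(\alpha,\alpha)}\,\mathscr K_{q,n}^{(0,1)}(0,\alpha),
\]
a single linear equation in the unknown $N_0$.

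Next I would clear the denominator and solve. Writing the equation as $S_n(0;q)\bigl(1+N_0\mathscr K_{q,n}^{(1,1)}(\alpha,\alpha)\bigr)=N_0(\mathscr D_qS_n)(\alpha)\mathscr K_{q,n}^{(0,1)}(0,\alpha)$ and collecting the terms in $N_0$, one obtains directly
\[
N_0=\left(\frac{(\mathscr D_qS_n)(\alpha)}{S_n(0;q)}\mathscr K_{q,n}^{(0,1)}(0,\alpha)-\mathscr K_{q,n}^{(1,1)}(\alpha,\alpha)\right)^{-1},
\]
which is the asserted expression. Here the division by $S_n(0;q)$ is legitimate because the Stieltjes--Wigert polynomials are orthogonal on $(0,\infty)$, so all their zeros are strictly positive and in particular $S_n(0;q)\neq 0$.

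Finally, to establish the trichotomy together with $N_0>0$, I would invoke the monotonicity already proved in the preceding theorem: for $\alpha<0$ the smallest zero $\eta_{n,1}(N)$ is a strictly decreasing function of $N$ with $\lim_{N\to\infty}\eta_{n,1}(N)=\alpha<0$, while at $N=0$ it coincides with the smallest zero of the standard polynomial $S_n(x;q)$, which is positive. By continuity and strict monotonicity there is therefore a unique value of the mass at which $\eta_{n,1}$ crosses the origin; this value must be the $N_0$ computed above, and it is necessarily positive. The sign behaviour $\eta_{n,1}>0$, $\eta_{n,1}=0$, $\eta_{n,1}<0$ for $N<N_0$, $N=N_0$, $N>N_0$ then follows immediately from the monotone decrease. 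The main obstacle is purely one of bookkeeping rather than of substance: one must confirm that the unique $N$ produced by the algebraic equation coincides with the crossing value supplied by the monotonicity argument, and hence that the parenthesized quantity defining $N_0$ is positive. This positivity is guaranteed by the interlacing and limiting relations of the preceding theorem, so no independent sign analysis of the kernels $\mathscr K_{q,n}^{(0,1)}(0,\alpha)$ and $\mathscr K_{q,n}^{(1,1)}(\alpha,\alpha)$ is required.
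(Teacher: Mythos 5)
Your proposal is correct and takes essentially the same route as the paper, whose proof of this corollary simply defers to the Al-Salam--Carlitz case: evaluate the connection formula \eqref{4:29} at the endpoint $x=0$, impose $S_n^{(\alpha)}(N_0;0;q)=0$, and solve the resulting linear equation for $N_0$. Your closing argument, deriving the trichotomy and the positivity $N_0>0$ from the monotone decrease and the limits $\eta_{n,1}(0)=x_{n,1}>0$, $\lim_{N\to\infty}\eta_{n,1}(N)=\alpha<0$ established in the preceding theorem, only makes explicit what the paper leaves implicit.
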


The proof of this result is analogue to the previous example so we 
leave it  to the reader. 
The numerical behaviour of the zeros for this family is very similar to 
the Al-Salam-Carlitz case, hence we are going to show 
here how changes the smallest zero of degree $n=14$
when $\alpha=-1$ for some real values of $0<q\le 1$ going to 1.

\begin{equation*}
\begin{tabular}{ccccccc}
\hline
$q$ & $N_{0}\left( 21,14,1/2\right)$ & $N$ & $10^{-5} N_0$ & $N_{0} $ & 
$10^{5} N_0$ & $10^{10} N_0$ \\ \hline
0.5 & 0.2717 & $\eta _{14,1}$ & $1.2482$ & $0.0000$ & $-3.2380$ 
& $-3.2381$ \\ 
0.8 & $2.95394\times 10^{-8}$ & $\eta _{14,1}$ & $0.5574$ 
& $0.0000$ & $-1.4878$ & $-1.4878$  \\ 
0.9 & $8.29261\times10^{-17}$ & $\eta _{14,1}$ & $0.4657$ & $0.0000$ 
& $-1.2625$ & $-1.2625$  \\ 
0.99 & $1.42958\times10^{-97}$ & $\eta _{14,1}$ & $0.6206$ & $0.0000$ 
& $-1.1597$ & $-1.1597$  \\ 
\hline
\end{tabular}
\end{equation*}

\subsection*{Acknowledgments}

The author R. S. Costas-Santos acknowledges financial 
support by Direcci\'on General de Investigaci\'on, 
Ministerio de Econom\'ia y Competitividad of Spain, 
grant MTM2015-65888-C4-2-P. The authors thank the 
anonymous referee for her/his valuable comments and 
suggestions. They contributed to improve the presentation 
of the manuscript.

\end{document}